\documentclass[11pt]{amsart}
\usepackage{amsmath,amsthm,latexsym,xspace,amscd,amssymb}
\usepackage[mathscr]{eucal}
\usepackage[spanish,portuguese]{babel}
\usepackage[latin1]{inputenc}
\usepackage{amsfonts}
\usepackage{amsmath,amsthm}
\usepackage{enumerate}
\usepackage{amssymb}
\usepackage{graphicx}

\usepackage [all]{xy}
\usepackage{longtable}

\usepackage{amsmath,amssymb,amsfonts,textcomp,layout,mathrsfs}

\usepackage[usenames,dvipsnames]{color}

\newcommand{\ao}{\~ ao}

\newcommand{\Ker}{\mathop\mathrm{Ker}\nolimits}
\newcommand{\Img}{\mathop\mathrm{Im}\nolimits}

\newcommand{\dime}{\mathop\mathrm{dim}\nolimits}

\theoremstyle{plain}
\newtheorem{teo}{Theorem}[section]
\newtheorem{propo}[teo]{Proposition}

\newtheorem{lem}[teo]{Lem}
\theoremstyle{definition}
\newtheorem{defin}[teo]{Definition}
\newtheorem{ej}[teo]{Example}

\numberwithin{equation}{section}

\def\ndv{\ {\mid \kern -0.7 em {\scriptstyle \not}} \ \ }
\def\nd{\ {\mid \kern -0.4 em {\scriptstyle \not}} \ \ }

\begin{document}
\begin{center}
    \vspace*{1cm}
    \large{\textsc{About to the  continuity of the application  square root of the  isomorphisms positive in Hilbert spaces}}\\

\vspace*{1.2cm}
\textsc{Jeovanny de Jesus Muentes Acevedo}

    \vskip 1.2cm
	\textsc{Instituto de Matem\'atica e Estatística}

	\textsc{Universidade de S\~ao Paulo}

    \vskip 1.2cm
    \normalsize{S\~ao Paulo, march of 2014}
\end{center}

\[\textbf{Abstract}\]
Let $H$ be a  complex Hilbert space. Every  nonnegative operator
$L \in L (H)$ admits a unique nonnegative square root  $R \in L (H)$, i.e,
 an  nonnegative operator $R \in L (H)$ such that $R^{2} = L$. Let  $GL_{S}^{+}(H)$ be the set of nonnegative  isomorphisms in $L (H)$. First we will prove that $GL_{S}^{+}(H)$ is convex and is a  (real) Banach manifold. Denote by  $L^{1/2}$
the square root nonnegative of the operator $L$. The objective of this paper is to
prove that application  $\mathcal{R}:GL_{S}^{+}(H)\rightarrow GL_{S}^{+}(H),$ defined by $\mathcal{R}(L)= L^{1/2}$,
is a homeomorphism.

\vspace{0,7cm}

\noindent \textbf{Key-words:} nonnegative operators,  functions of operators,    Hilbert spaces,  spectral theory.

\section{Introduction}
Let $ H$ be a complex Hilbert space. The Theorem \ref{raizquadrada}  shows that every nonnegative bounded operator admits a unique nonnegative square root.
The main goal of this paper is to prove that the application that for each nonnegative isomorphism associates its nonnegative square root is  a homeomorphism.

P.M. Fitzpatrick, J. Pejsachowicz and L. Recht  in \cite{fpl}, use this square root to show the existence of a parametrix cogredient to a path of  self-adjoint Fredholm operators, which is used for to define  the spectral flux for this path.
The square root is also used to find the polar decomposition of a bounded operator (see, for example,  T. Kato in \cite{tk}, p. 334). This polar decomposition permits to determine the positive and negative spectral subspaces of any self-adjoint operator.

In the next chapter we will remember some notions as: spectrum of an operator, self-adjoint operator (nonnegative, positive), nonnegative square root of a nonnegative operator, among others. We will also see several known results in functional analysis that will be used in the rest of the work.

We shall denote by $  GL_{S} ^ {+} (H) $ the subset of $ L (H) $ of positive isomorphisms. In the third chapter we will prove that $ GL_ {S} ^ {+} (H) $ is convex (i.e, if $ L $ and $ T \in GL_ {S} ^ {+} (H) $, $ tL + (1 -t) t \in GL_{S} ^ {+}(H)$ for every $ t \in [0,1] $) and is a (real) Banach manifolds.

Let $ L \in L (E) $, where $ E $ is a complex Banach space.
Based on the Cauchy's integral formula, in Chapter 4 we will see that
  if $ f: \Delta \rightarrow \mathbb {C} $ is a holomorphic application, where $ \Delta $ is an open subset of $ \mathbb{C} $ that contains the spectrum of $ L $, denoted by $\sigma(L),$ we can to define the operator $ f (U) \in L (E) $ as \begin{equation}  f (L) = - \frac{1}{2 \pi i} \int_ {\Gamma} f (\lambda ) (L-\lambda I)^{-1} d \lambda , \end{equation}
where $ \Gamma $ is a positively oriented closed path (or a finite number of closed paths which do not intersect)  simple, contained in $ \Delta$ and that contains $  \sigma (L)$ therein.

Finally, in Chapter 5, we prove that if $ L \in L (H) $ is positive, the nonnegative square root  of $ L $ can be expressed in the form $ \gamma (L) $, where $ \gamma: \Theta \rightarrow \mathbb{C} $ is a appropriate   holomorphic  application and $ \Theta $ is an open subset of the complex numbers, which contains the spectrum of $ L $. Using this expression, we shall show that the application square root $ \mathcal{R}:  GL_{S}^{+} (H) \rightarrow GL_ {S} ^ {+} (H) $ is continuous.

\section{Preliminary}
In this chapter we will remember some notions and results that will be of much use at work. Throughout the work, $E$ and $ F $ shall denote complex Banach spaces and $ H $ be a complex Hilbert space with inner product $ \langle \cdot, \cdot \rangle $. Denote by $ L (E, F) $ (or $ L (E) $  if $ F = E$) the Banach space of bounded linear operators $ L: E \rightarrow F $ with the norm
\[\Vert L\Vert =\sup_{x\in E}\frac{\Vert Lx\Vert}{\Vert x\Vert}.\]

We say that an operator $ L \in L (E, F) $ is \textit{invertible} (or is a \textit{isomorphism}) if its inverse, which we denote by $ L^ {-1} $, is limited. If $ F = E $, $ I \in L (E) $ denotes the identity operator.

\begin{propo}\label{aberturagl}Let  $L\in L(E,F)$ be a invertible operator. If $A\in L(E,F)$ and $\Vert A-L\Vert< 1/\Vert L^{-1}\Vert,$ then $A$ is invertible.
\end{propo}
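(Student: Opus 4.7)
The plan is to reduce the invertibility of $A$ to a Neumann-series argument by writing $A$ as a product of two invertible operators. Specifically, I would factor
\[
A = L + (A-L) = L\bigl(I + L^{-1}(A-L)\bigr),
\]
so that proving $A$ is invertible amounts to proving that the operator $I + L^{-1}(A-L) \in L(E)$ is invertible.

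Set $B = L^{-1}(A-L) \in L(E)$. The submultiplicativity of the operator norm gives $\|B\| \leq \|L^{-1}\|\,\|A-L\|$, and the hypothesis $\|A-L\| < 1/\|L^{-1}\|$ yields $\|B\| < 1$. The next step is to show that $I + B$ is invertible. I would do this by verifying that the Neumann series
\[
S = \sum_{n=0}^{\infty} (-B)^n
\]
converges in $L(E)$: since $\|(-B)^n\| \leq \|B\|^n$ and $\sum \|B\|^n$ is a convergent geometric series (because $\|B\|<1$), the partial sums form a Cauchy sequence in the Banach space $L(E)$ and hence converge to some $S \in L(E)$. A direct telescoping computation then shows $(I+B)S = S(I+B) = I$, so $I+B$ is invertible with $(I+B)^{-1} = S$.

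Finally, since $L$ and $I+B$ are both invertible, the product $A = L(I+B)$ is invertible, with inverse $A^{-1} = (I+B)^{-1} L^{-1}$, which is again a bounded operator in $L(F,E)$.

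I do not expect any serious obstacle here: the only delicate point is confirming that the operator norm in $L(E)$ is submultiplicative and complete, both standard facts, so that the geometric estimate closes and the Neumann series makes sense. The result is essentially a restatement of the fact that the set of invertible operators is open in $L(E,F)$, and the factorization trick $A = L(I + L^{-1}(A-L))$ is the only real idea required.
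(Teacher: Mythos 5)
Your proof is correct and complete. The paper itself does not supply a proof of this proposition; it merely refers the reader to Kato's book, where the argument is precisely the Neumann-series one you reconstruct: factor $A = L\bigl(I + L^{-1}(A-L)\bigr)$, observe $\|L^{-1}(A-L)\| < 1$, and invert $I + L^{-1}(A-L)$ by the geometric series in the Banach algebra $L(E)$. So you have in effect filled in the omitted standard proof rather than taken a different route.
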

\begin{proof}See, for example, \cite{tk}, p.\ 31.
\end{proof}
We shall denote by $\Img f$ the image of a application $f$.
\begin{propo}\label{inversivelimagem} Suppose that $ L \in L (E, F) $ is injective. Then, $ L^{-1}: \Img L \rightarrow E $ is bounded if and only if there exists $ c> $ 0 such that $ \Vert Lx \Vert \ge c \Vert x \Vert $ for all $ x \in E $. Moreover, if  \, $L^{-1}: \Img L \rightarrow E $ is continuous, then $ \Img L $ is a Banach space.
\end{propo}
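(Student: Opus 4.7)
The plan is to treat the two assertions separately, since the first is a straightforward algebraic equivalence and the second is a completeness argument that relies on it.

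For the equivalence, I would start with the forward direction. Assuming $L^{-1}\colon \Img L\to E$ is bounded with norm $M:=\|L^{-1}\|$, for any $x\in E$ set $y=Lx\in\Img L$; then $x=L^{-1}y$, so
\[
\|x\|=\|L^{-1}(Lx)\|\le M\,\|Lx\|,
\]
which gives $\|Lx\|\ge c\|x\|$ with $c=1/M>0$. For the converse, assume $\|Lx\|\ge c\|x\|$ for all $x\in E$. Given $y\in\Img L$, write $y=Lx$ (unique by injectivity of $L$); then $\|L^{-1}y\|=\|x\|\le (1/c)\|Lx\|=(1/c)\|y\|$, so $L^{-1}$ is bounded on $\Img L$ with norm at most $1/c$.

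For the second part, suppose $L^{-1}\colon \Img L\to E$ is continuous, so by the first part there exists $c>0$ with $\|Lx\|\ge c\|x\|$ for all $x\in E$. I would show that $\Img L$ is closed in $F$, which suffices since closed subspaces of a Banach space are Banach. Take a sequence $(y_n)\subset\Img L$ converging in $F$ to some $y$; in particular $(y_n)$ is Cauchy. Writing $y_n=Lx_n$ with $x_n\in E$, the inequality $\|y_n-y_m\|=\|L(x_n-x_m)\|\ge c\,\|x_n-x_m\|$ shows that $(x_n)$ is Cauchy in $E$. By completeness of $E$, $x_n\to x$ for some $x\in E$, and by continuity of $L$ we get $y_n=Lx_n\to Lx$. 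Uniqueness of limits forces $y=Lx\in\Img L$, so $\Img L$ is closed, hence a Banach space in the norm inherited from $F$.

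There is no real obstacle here; the only subtle point is recognising that the bound $\|Lx\|\ge c\|x\|$ is exactly what is needed to transport a Cauchy sequence in $\Img L$ back to a Cauchy sequence in $E$, after which completeness of $E$ and continuity of $L$ finish the argument. I would present the proof in the order given above, keeping the constants $c$ and $M=1/c$ explicit so that the equivalence is transparent.
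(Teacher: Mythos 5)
Your proposal is correct and follows essentially the same route as the paper's proof: both directions of the equivalence are established by the same one-line estimates (merely presented in the opposite order), and the completeness of $\Img L$ is obtained by pulling a Cauchy sequence $y_n=Lx_n$ back to a Cauchy sequence $(x_n)$ in $E$ via the bound $\|Lx\|\ge c\|x\|$, then pushing the limit forward by continuity of $L$. The only cosmetic difference is that you phrase the last step as ``$\Img L$ is closed, hence Banach,'' while the paper verifies completeness of $\Img L$ directly; these are the same argument.
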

\begin{proof}Suppose first that there exists  $ c> $ 0 such that $ \Vert Lx \Vert \ge c \Vert x \Vert $ for all $ x \in E $ and prove that $ L ^ {-1}: \Img L \rightarrow E $ is continuous. Indeed, it is clear that $ L ^ {-1}: \Img L \rightarrow E $ is bijective. If $ y \in \Img L $, we have
\[\Vert y\Vert =\Vert LL^{-1}y\Vert \geq c\Vert L^{-1}y\Vert.\]
That is, $\Vert L^{-1}y\Vert \leq (1/c)\Vert y\Vert $ for all $ y \in \Img L,$ which proves that  $ L ^ {-1 }: \Img L \rightarrow E $ is continuous.

Conversely, if $ L ^ {-1}: \Img L \rightarrow E $ is continuous, there exists $ C> $ 0 such that $ \Vert T ^ {-1} y \Vert \leq C \Vert y \Vert $ for all $ y \in \Img L. $ Since $ L $ is injective, then $ LL^ {-1} Lx = x $ for all $ x \in E $. Hence, $$ \Vert x \Vert = \Vert L ^ {-1} Lx \Vert \leq C \Vert Lx \Vert \quad \text {for all } x \in E. $$  Consequently, there exists $ 1 / C >  0 $ such that $(1 / C) \Vert x \Vert \leq \Vert Lx \Vert $ for all $ x \in E .$

We shall now  that $ \Img L$ is a Banach space. Let $ (y_{n})_{n = 1}^{\infty} $ be a Cauchy sequence in $ \Img L $. So, $ y_{n} = L {x_{n}} $ for a sequence $ (x_ {n}) _ {n = 1} ^ {\infty} $ in $ E $. Now, by hypothesis,
\[\Vert x_{n}-x_{m} \Vert \leq (1 / c) \Vert L (x_{n}-x_{m}) \Vert = (1 / c) \Vert y_{n} -y_{m} \Vert. \]
This fact implies that $ (x_{n})_ {n = 1}^{\infty} $ is a Cauchy sequence. Since $ E $ is Banach, $ (x_ {n}) _ {n = 1} ^ {\infty} $ converges to someone $ x \in E. $ Therefore, by the continuity of $ L $ we have $ (y_ {n}) _ {n = 1} ^ {\infty} $ converges to $ L (x) \in \Img L. $ Hence $ \Img L $ is a Banach space.\end{proof}

\begin{defin}[Spectrum of an operator]\label{ertbbjj} Let $L$ be an operator $L ( E).$
A \textit{regular value} of $ L $ is a number $ \lambda \in \mathbb{C} $ such that the operator $ L - \lambda I $ is a isomorphism. The set of  regular values  $ L $, denoted by  $\rho (L),$ is called \textit{resolvent set} of $ L $. Its complement  $ \sigma ( L ) = \mathbb{C} - \rho (L) $ is called \textit{spectrum} of $ L $.
For $ \lambda \in \rho(L) $, the application  $ R (\lambda) = ( L- \lambda  I ) ^{-1} $  and called \textit{resolvent application} of $ L $.
\end{defin}
It is well known that the spectrum of an operator $ L \in L ( E) $ is a subset not empty $\mathbb{C}$, compact and \begin{equation} \label{dfghjd}  \Vert \lambda \Vert \leq \Vert L \Vert \quad \text { for all } \lambda \in \sigma ( L ) . \end{equation}
As we said in the introduction,  the objective of this paper has to do with the square root of an    nonnegative operator. Remember here in these notions.
\begin{defin}[Self-adjoint operator] Let $ L \in L ( H ) $. We say that $ L $ is \textit{self-adjoint} if \[ \langle Lx , x \rangle = \langle x , Ly \rangle \quad \text{ for all } x , y \in H. \] \end{defin}
A characteristic of the spectrum of self-adjoint operators is given in the following theorem, whose proof can be seen in \cite{at}, p. 324, Theorem 6.11-A.
\begin{teo}\label{wsde12} The spectrum of a self-adjoint operator $ L \in L ( H ) $ is contained in the interval $ [ m , M ] \subseteq \mathbb{R} $, where   \[ m = \inf_{\Vert x \Vert = 1} \langle Lx , x \rangle \quad \text{ and } \quad m = \sup_ {\Vert x \Vert = 1 } \langle Lx , x \rangle . \]
\end{teo}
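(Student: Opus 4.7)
The plan is to show that every $\lambda\in\mathbb{C}\setminus[m,M]$ is a regular value of $L$; equivalently, $L-\lambda I$ is an isomorphism. I would do this in two stages: first establish a lower bound of the form $\|(L-\lambda I)x\|\geq c\|x\|$ for all $x\in H$, which by Proposition \ref{inversivelimagem} gives injectivity with bounded inverse on the image, and implies that $\Img(L-\lambda I)$ is closed; then show this image is all of $H$ by a density argument using self-adjointness.

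For the estimate, I would split into cases. If $\lambda=\alpha+i\beta$ with $\beta\neq 0$, the computation
\[
\|(L-\lambda I)x\|^{2}=\|Lx-\alpha x\|^{2}+\beta^{2}\|x\|^{2},
\]
which uses that $\langle Lx,x\rangle\in\mathbb{R}$ because $L$ is self-adjoint, yields $\|(L-\lambda I)x\|\geq|\beta|\|x\|$. If $\lambda\in\mathbb{R}$ with $\lambda>M$, then $\langle(\lambda I-L)x,x\rangle\geq(\lambda-M)\|x\|^{2}$, and Cauchy--Schwarz gives $\|(L-\lambda I)x\|\geq(\lambda-M)\|x\|$; the case $\lambda<m$ is symmetric. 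Hence in all cases outside $[m,M]$ we obtain the desired constant $c>0$.

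With the bound in hand, Proposition \ref{inversivelimagem} tells me $\Img(L-\lambda I)$ is closed, so it suffices to prove it is dense, i.e.\ its orthogonal complement is $\{0\}$. Suppose $y\in H$ satisfies $\langle(L-\lambda I)x,y\rangle=0$ for every $x\in H$. Since $L$ is self-adjoint, this rewrites as $\langle x,(L-\bar{\lambda}I)y\rangle=0$ for every $x$, so $(L-\bar{\lambda}I)y=0$. But $\bar{\lambda}$ also lies outside $[m,M]$, so applying the same lower bound (now to $\bar{\lambda}$ in place of $\lambda$) gives $\|y\|\leq (1/c)\|(L-\bar{\lambda}I)y\|=0$, forcing $y=0$. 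Thus $\Img(L-\lambda I)=H$ and $L-\lambda I$ is an isomorphism.

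The main obstacle is not any single step but making sure the density argument is correctly handled: it is tempting to invoke a Hahn--Banach or general functional-analytic fact, but the cleanest route is to exploit self-adjointness directly, reducing the density of the image to the injectivity of $L-\bar{\lambda}I$, which is already covered by the very same estimate. Once this observation is made, the proof reduces to the explicit computations of $\|(L-\lambda I)x\|^{2}$ described above.
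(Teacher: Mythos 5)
The paper gives no proof of its own for Theorem~\ref{wsde12}, instead referring the reader to Taylor's text (cited as \cite{at}, p.\ 324, Theorem 6.11-A), so there is no in-paper argument to compare against. Your proof is the standard one and is correct: the identity $\|(L-\lambda I)x\|^{2}=\|(L-\alpha I)x\|^{2}+\beta^{2}\|x\|^{2}$ for $\lambda=\alpha+i\beta$ (using that $\langle (L-\alpha I)x,x\rangle$ is real) handles non-real $\lambda$, the Cauchy--Schwarz estimate handles real $\lambda$ outside $[m,M]$, Proposition~\ref{inversivelimagem} then yields a closed range with bounded inverse, and the key observation that $\bar{\lambda}$ also lies outside $[m,M]$ turns the density of the range into the already-established injectivity of $L-\bar{\lambda}I$, with no Hahn--Banach or open-mapping input needed. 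One could add a single sentence noting that once $L-\lambda I$ is shown to be a bounded bijection, the same lower bound $\|(L-\lambda I)x\|\geq c\|x\|$ directly gives $\|(L-\lambda I)^{-1}\|\leq 1/c$, so $\lambda\in\rho(L)$, but this is implicit in your use of Proposition~\ref{inversivelimagem} and does not constitute a gap.
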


\begin{defin}Let $ L \in L (H) $ be a self-adjoint operator.
If $ \langle Lx, x \rangle \geq $ 0  for all $ x \in H $, we say that $ L $ is \textit{nonnegative}. If $ \langle Lx, x \rangle> 0,$ $ (\langle Lx, x \rangle <0) $ for all $ x \in H$ with $ \Vert x \Vert = 1 $, we say that $ L $ is \textit{positive}.\end{defin}

It is not difficult to see that if $ L$ is positive, then $ L $ is injective.

\begin{defin}[Square root] Let $ L \in L (H) $ be a nonnegative operator. A \textit{square root} of $ L$ is an operator $ R \in L (H) $ such that $ R^ {2} = L $.\end{defin}

Of course, if $ R $ is a square root of a nonnegative operator $ L $, then $ R $ is also a square root of $L $. In general, a nonnegative operator may have several square roots.
The following theorem, whose proof can be found, for example, in \cite{ek}, p. 476, Theorem 9.4-2, shows that each nonnegative operator has exactly one nonnegative square root.

\begin{teo}[Nonnegative square root]\label{raizquadrada}Each nonnegative operator $ L \in L (H) $ has a nonnegative square root $ R $, which is unique. The operator $ R $ commutes with each operator in $ L (H)$ that commute with  $ L$.
\end{teo}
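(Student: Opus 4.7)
The plan is to use the classical Riesz--Nagy iteration. After rescaling (replacing $L$ by $L/\|L\|$, which is harmless when $L\neq 0$) I may assume $0 \leq L \leq I$. I would then set $R_{0} = 0$ and $R_{n+1} = R_{n} + \tfrac{1}{2}(L - R_{n}^{2})$. If this sequence converges strongly to some $R$ with $0 \leq R \leq I$, passing to the limit in the recursion gives $R^{2} = L$, so $R$ is the desired nonnegative square root (after multiplying back by $\|L\|^{1/2}$ to recover the original scale).

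To establish convergence, I would prove by induction that each $R_{n}$ is a polynomial in $L$ with real coefficients (so it is self-adjoint and commutes with everything that commutes with $L$), and that $0 \leq R_{n} \leq R_{n+1} \leq I$. The two key algebraic identities are
\[
I - R_{n+1} = \tfrac{1}{2}(I - R_{n})^{2} + \tfrac{1}{2}(I - L),
\]
which expresses $I - R_{n+1}$ as a sum of manifestly nonnegative operators and yields $R_{n+1} \leq I$, and
\[
R_{n+1} - R_{n} = (R_{n} - R_{n-1})\bigl[I - \tfrac{1}{2}(R_{n} + R_{n-1})\bigr],
\]
which, after writing the two factors as polynomials in $L$ whose positivity is built up inductively, yields $R_{n+1} \geq R_{n}$. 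The monotone bounded sequence $(R_{n})$ then converges strongly by the standard result for monotone, uniformly bounded sequences of self-adjoint operators, giving the required $R$.

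For the commutation property, since each $R_{n}$ is a polynomial in $L$, any $A \in L(H)$ that commutes with $L$ commutes with every $R_{n}$ and hence, by passing to the strong limit, with $R$. For uniqueness, suppose $R'$ is another nonnegative square root of $L$. Then $R'L = R'^{3} = LR'$, so by the commutation property just proved, $R'$ commutes with $R$. For arbitrary $x \in H$, put $y = (R-R')x$; then
\[
\langle Ry, y\rangle + \langle R'y, y\rangle = \langle (R+R')(R-R')x, y\rangle = \langle (R^{2}-R'^{2})x, y\rangle = 0,
\]
and since both summands are nonnegative, each is zero. The Cauchy--Schwarz inequality applied to the nonnegative sesquilinear form $(u,v)\mapsto \langle Ru, v\rangle$ gives $Ry = 0$, and analogously $R'y = 0$; hence $(R-R')^{2}x = 0$, so $\|(R-R')x\|^{2} = \langle (R-R')^{2}x, x\rangle = 0$, and $R = R'$.

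The step I expect to be the main obstacle is the monotonicity half of the induction, because the natural factorisation of $R_{n+1} - R_{n}$ asks one to invoke that the product of two commuting nonnegative self-adjoint operators is nonnegative---a fact whose clean proof is essentially the spectral theorem, which one would prefer to avoid here. The standard way out is to track the polynomial-in-$L$ form of the iterates with enough care to deduce positivity of the relevant combinations directly from $0 \leq L \leq I$, rather than from an abstract product lemma; once this is settled, the strong-limit step and the rest of the argument are essentially algebraic.
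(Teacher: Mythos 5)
The paper does not give its own proof; it cites Kreyszig (Theorem~9.4-2) and records that Kreyszig's argument rests on precisely the iteration you set up, $R_0=0$, $R_{n+1}=R_n+\tfrac12(L-R_n^2)$, so your proposal follows the same Riesz--Nagy/Kreyszig route, and your commutation and uniqueness arguments are correct. One correction regarding the obstacle you flag at the end: the lemma that a product of two commuting nonnegative self-adjoint operators is again nonnegative does not rest on the spectral theorem. It has a short elementary proof (normalize $0\le A\le I$, put $A_1=A$, $A_{n+1}=A_n-A_n^2$, check $0\le A_n\le I$ by induction, telescope $\sum_{k\le n}A_k^2=A_1-A_{n+1}$ to get $\sum_k\|A_kx\|^2<\infty$, hence $A_nx\to 0$ and $Ax=\sum_k A_k^2x$; then for commuting $B\ge 0$, $\langle BAx,x\rangle=\sum_k\langle BA_kx,A_kx\rangle\ge 0$), and Kreyszig in fact proves it first (his Theorem~9.3-1) before the square-root theorem. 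Given that lemma, the monotonicity half of your induction is immediate from your factorization of $R_{n+1}-R_n$, and there is no need for the polynomial bookkeeping; indeed, with your base point $R_0=0$ that bookkeeping is awkward, since already $R_2-R_1=\tfrac12L-\tfrac18L^2$ has coefficients of both signs as a polynomial in $L$. (The clean nonnegative-coefficient version of the argument uses the substitution $Y_n=I-R_n$, $X=I-L$, $Y_{n+1}=\tfrac12(X+Y_n^2)$ with $Y_0=0$, i.e.\ with base point $R_0=I$, not $R_0=0$.)
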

The demonstration of the previous theorem, given by Kreyszig in \cite{ek}, is to prove that the sequence $ (R_ {n}) _ {n = 1} ^ {\infty} $, where $ R_{0} = 0 $ and
\begin{equation} \label{22} R_ {n +1} = R_ {n} + \frac{1}{2} (L-R_ {n}^{2}), \quad n = 0,1 , 2, ...., \end{equation}
 converges pointwise to nonnegative  operator $R$ such that  $ R^ {2} = L, $ this is, the nonnegative square root of  $L$. However, the theorem is not an explicit form of the operator $ R $.

In the case where $ H$ has finite dimension, it is not difficult to find the nonnegative  square root of a  nonnegative operator, as we will see in the following example.

\begin{ej}Suppose that $ \dime H = n <\infty $ and let $ L \in L (H) $ be a nonnegative operator. Since $ L $ is self-adjoint, there exists an ortonormal basis $ \{v_{1}, v_ {2}, ..., v_ {n} \} $ of $ H $ and complex numbers $ \lambda_ {1} , \lambda_ {2}, ..., \lambda_ {n} $ (not necessarily different) such that
\begin{equation}\label{33tff}Lv_{i}=\lambda_{i}v_{i}\quad \text{ for }i=1,2,...,n.\end{equation}
Since $ L $ is nonnegative, then the $ \lambda_ {1}, \lambda_ {2}, ..., \lambda_ {n} $ are nonnegative real numbers. Take $ R \in L (H) $, defined by
\[Rv_ {i} = \sqrt {\lambda_ {i}} v_ {i} \quad \text{for} i = 1,2, ..., n. \]
So, is $ R $ nonnegative and $ R^ {2} = L $, ie, $ R $ is the nonnegative square root of $ L. $
\end{ej}

When $ H $ has infinite dimension, we  cannot  always have the expressed in the equation \eqref{33tff}. However, in Chapter 5 we will give an explicit formula for the square root for positive definite isomorphism $ L $, that is not simple though, serve to reach our goal.

\medskip

We shall end this chapter  showing that the resolvent application is holomorphic.
\begin{defin}[Applications holomorphic] Let $ \Delta $ be an open subset of $ \mathbb{C} $. We say that an application $f: \Delta \rightarrow E $ is \textit{holomorphic} on $ \lambda_ {0} \in \Delta $ if there exists  the limit
\[\lim_ {\lambda \rightarrow \lambda_ {0}}
\frac{f (\lambda)- f (\lambda_{0})} {\lambda -\lambda_ {0}} = {f ^ \prime} (\lambda_ {0}). \]
If $ f $ is holomorphic at each point of $ \Delta $, we say that $ f $ is \textit{holomorphic} on $ \Delta $ or simply that $ f $ is \textit{holomorphic}.
\end{defin}

\begin{propo}\label{3r3eff}Let $ L \in L (E) $. The application $ R: \rho (L) \rightarrow L (E), $ defined by $ R (\lambda) = (L-\lambda I) ^ {-1} $ for $ \lambda \in \rho (L ) ,$ is holomorphic.
\end{propo}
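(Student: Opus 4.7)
The plan is to verify the definition of holomorphy directly by forming the difference quotient and applying the classical resolvent identity. Fix $\lambda_0 \in \rho(L)$. The key algebraic observation is that
\[
(L-\lambda_0 I) - (L-\lambda I) = (\lambda-\lambda_0)I,
\]
so multiplying on the left by $R(\lambda)=(L-\lambda I)^{-1}$ and on the right by $R(\lambda_0)=(L-\lambda_0 I)^{-1}$ gives
\[
R(\lambda) - R(\lambda_0) = (\lambda-\lambda_0)\, R(\lambda)\, R(\lambda_0).
\]
Therefore the difference quotient equals $R(\lambda) R(\lambda_0)$, and the whole problem is reduced to showing that $R(\lambda) \to R(\lambda_0)$ in the operator norm as $\lambda \to \lambda_0$. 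Once this is done, the limit of the difference quotient will be $R(\lambda_0)^2$, i.e., $R'(\lambda_0)=R(\lambda_0)^2$.

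To establish the continuity, I would first show that $\rho(L)$ is open around $\lambda_0$ and produce an explicit formula for $R(\lambda)$. Factor
\[
L-\lambda I = (L-\lambda_0 I)\bigl[I - (\lambda-\lambda_0)R(\lambda_0)\bigr].
\]
By Proposition \ref{aberturagl} applied to the bracketed factor (or equivalently by a Neumann series), whenever $|\lambda-\lambda_0| < 1/\Vert R(\lambda_0)\Vert$ the bracket is invertible with inverse of norm at most $1/(1-|\lambda-\lambda_0|\Vert R(\lambda_0)\Vert)$. Consequently $\lambda\in\rho(L)$ and
\[
R(\lambda) = \bigl[I-(\lambda-\lambda_0)R(\lambda_0)\bigr]^{-1} R(\lambda_0) = \sum_{n=0}^{\infty} (\lambda-\lambda_0)^{n} R(\lambda_0)^{n+1},
\]
the series converging in $L(E)$ for $|\lambda-\lambda_0|<1/\Vert R(\lambda_0)\Vert$.

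Having this series, continuity at $\lambda_0$ is immediate: the $n=0$ term is $R(\lambda_0)$ and the tail is bounded by the geometric series $\sum_{n\ge 1}|\lambda-\lambda_0|^n\Vert R(\lambda_0)\Vert^{n+1}$, which tends to $0$. Substituting back into the difference quotient identity,
\[
\lim_{\lambda\to\lambda_0}\frac{R(\lambda)-R(\lambda_0)}{\lambda-\lambda_0} = \lim_{\lambda\to\lambda_0} R(\lambda) R(\lambda_0) = R(\lambda_0)^2,
\]
proving that $R$ is holomorphic at $\lambda_0$ with derivative $R(\lambda_0)^2$. Since $\lambda_0 \in \rho(L)$ was arbitrary, $R$ is holomorphic on $\rho(L)$.

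There is no real obstacle here: the whole argument hinges on the resolvent identity plus the perturbation lemma (Proposition \ref{aberturagl}). The only subtlety worth emphasizing is that the definition of holomorphy in the paper requires the difference quotient to converge \emph{in the operator norm}, not merely pointwise; this is why I prefer to extract the explicit Neumann series expansion, which makes the norm convergence transparent and moreover exhibits $R$ as an analytic $L(E)$-valued function with a genuine power series representation around each $\lambda_0\in\rho(L)$.
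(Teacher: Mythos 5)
Your proof is essentially the same as the paper's: both use the resolvent identity to express the difference quotient as $R(\lambda)R(\lambda_0)$ and then pass to the limit, obtaining $R'(\lambda_0)=R(\lambda_0)^2$. The one place where you go further is the continuity of $R$ at $\lambda_0$: the paper simply asserts this as ``composition of continuous applications,'' whereas you justify it explicitly via the factorization $L-\lambda I=(L-\lambda_0 I)\bigl[I-(\lambda-\lambda_0)R(\lambda_0)\bigr]$ and the Neumann series, which also yields the local power-series representation of $R$. That is a genuine improvement in rigor, but it does not change the overall route.
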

\begin{proof}It is easy to see that $ R $ is continuous, because is composition  of continuous  applications. Let us fix $ \lambda_ {0} $ in $ \rho (L). $ Note that if $ L $ and $ T \in L (E, F) $ are invertible, then
\begin{equation}\label{t34tgty12}L^{-1}-T^{-1}=-L^{-1}(L-T)T^{-1}.
\end{equation}
In fact,
\begin{align*}L^{-1}-T^{-1}&=L^{-1}L(L^{-1}-T^{-1})TT^{-1}\\
&=L^{-1}(I-LT^{-1})TT^{-1}\\
&=-L^{-1}(L-T)T^{-1}.\end{align*}
Thus, if $\lambda\in \rho(L),$ by \eqref{t34tgty12} we have
\[(L-\lambda I)^{-1}-(L-\lambda_{0} I)^{-1}=-(L-\lambda I)^{-1}(\lambda_{0} I-\lambda I)(L-\lambda_{0} I)^{-1},\]
that is,
\[\frac{(L-\lambda I)^{-1}-(L-\lambda_{0} I)^{-1}}{(\lambda-\lambda_{0})}=(L-\lambda I)^{-1}(L-\lambda_{0} I)^{-1}.\]
Consequently,
\[\lim_{\lambda\rightarrow \lambda_{0}}\frac{(L-\lambda I)^{-1}-(L-\lambda_{0} I)^{-1}}{(\lambda-\lambda_{0})}=(L-\lambda_{0} I)^{-2},\]
which proves the proposition.
\end{proof}
\section{Some topological properties of $GL_{S}^{+}(H)$}
Recall that we denote by $  GL_{S} ^ { + } ( H ) $ the set of positive isomorphisms in $ L ( H ) $. In this section we shall show that $ GL_ { S } ^ { + } ( H ) $ is convex and, moreover, is a  (real)  Banach  manifold.

\medskip

First, assume that $ H $ is  finite-dimension. Take $ L $, $ T \in  GL_{ S } ^ { + } ( H ) $ and $ t \in [ 0,1] $. Note that $ tL + ( 1 - t ) T $ is positive, because if $ x \neq 0 \in H $,
\begin{align*}\langle tL+(1-t)Tx,x\rangle &=\langle tLx,x\rangle +\langle (1-t)Tx,x\rangle\\
&=t\langle Lx,x\rangle +(1-t)\langle Tx,x\rangle\\
&>0. \end{align*}
Thus, $ tL + ( 1 - t ) T$ is injective. Since $ H $ is finite-dimensional, $ tL + ( 1 - t ) T $ is an isomorphism. Therefore $ tL + ( 1- t) T\in GL_ { S } ^ {+} ( H) $. Hence
$GL_  { S } ^ { + } ( H ) $ is a convex subset of $ L ( H ) $.

To show that $ GL_{S} ^ { + } ( H ) $ is convex in the case where $ H $ is infinite-dimensional, first let's look at the following
 known result in the theory of operators in spaces of
Hilbert, which we give a proof for reasons of precision.
\begin{lem}\label{asdfgh33} If $ L: H \rightarrow H $ is a positive isomorphism, then there exists $ c> 0$ such that
\[\inf_{\Vert x\Vert=1}\langle Lx,x\rangle \ge c.\]
\end{lem}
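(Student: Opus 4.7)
The plan is to combine two facts: that $L$ is bounded below as an operator (because it is invertible), and that positivity of $L$ lets us control $\|Lx\|$ by the quadratic form $\langle Lx,x\rangle$. The delicate point is that positivity alone only gives $\langle Lx,x\rangle>0$ pointwise, not uniformly on the unit sphere, so we have to use invertibility to upgrade this to a uniform bound.

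First I would invoke Proposition \ref{inversivelimagem}: since $L$ is invertible with $L^{-1}\in L(H)$ bounded, there is $c_{0}>0$ (namely $c_{0}=1/\Vert L^{-1}\Vert$) with
\[
\Vert Lx\Vert \ge c_{0}\Vert x\Vert \qquad \text{for all } x\in H.
\]
This is the coercivity input. By itself it does not bound $\langle Lx,x\rangle$ from below, because the inner product could in principle be very small compared to $\Vert Lx\Vert\Vert x\Vert$.

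Next, I would exploit positivity via a generalised Cauchy--Schwarz inequality. Because $L$ is self-adjoint and nonnegative, the sesquilinear form $B(x,y)=\langle Lx,y\rangle$ is a positive semi-definite Hermitian form on $H$, so it satisfies
\[
|B(x,y)|^{2}\le B(x,x)\,B(y,y)\qquad \text{for all } x,y\in H.
\]
Setting $y=Lx$ gives
\[
\Vert Lx\Vert^{4}=|\langle Lx,Lx\rangle|^{2}\le \langle Lx,x\rangle\,\langle L^{2}x,Lx\rangle\le \Vert L\Vert\,\Vert Lx\Vert^{2}\,\langle Lx,x\rangle,
\]
using $\langle L^{2}x,Lx\rangle\le \Vert L^{2}x\Vert\Vert Lx\Vert\le\Vert L\Vert\Vert Lx\Vert^{2}$. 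Cancelling $\Vert Lx\Vert^{2}$ (trivially true if $Lx=0$) yields the key inequality
\[
\Vert Lx\Vert^{2}\le \Vert L\Vert\,\langle Lx,x\rangle.
\]

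Finally I would combine the two estimates: for every $x\in H$ with $\Vert x\Vert=1$,
\[
\langle Lx,x\rangle \ge \frac{\Vert Lx\Vert^{2}}{\Vert L\Vert}\ge \frac{c_{0}^{2}\Vert x\Vert^{2}}{\Vert L\Vert}=\frac{1}{\Vert L\Vert\,\Vert L^{-1}\Vert^{2}},
\]
so the lemma holds with $c=1/(\Vert L\Vert\,\Vert L^{-1}\Vert^{2})>0$. The hard part is precisely the generalised Cauchy--Schwarz trick that converts the norm bound coming from invertibility into a quadratic form bound; the rest is bookkeeping.
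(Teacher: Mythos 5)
Your proof is correct and follows essentially the same route as the paper: establish $\Vert Lx\Vert^{2}\le\Vert L\Vert\,\langle Lx,x\rangle$ via the Cauchy--Schwarz inequality for the positive form $B(x,y)=\langle Lx,y\rangle$ with $y=Lx$, then combine with the lower bound $\Vert Lx\Vert\ge\Vert x\Vert/\Vert L^{-1}\Vert$ from invertibility. The only cosmetic difference is that the paper re-derives this generalised Cauchy--Schwarz inequality from scratch by a discriminant argument on $\langle L(x+a\langle Lx,y\rangle y),\,x+a\langle Lx,y\rangle y\rangle\ge 0$, whereas you invoke it as a known fact about positive semi-definite Hermitian forms; both are sound.
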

\begin{proof}Let $ x, y \in H $ be such that $ \langle Lx, y \rangle \neq 0. $ For $ a \in \mathbb{R}, $ we take $ w_{a} = x + a \langle Lx, y \rangle y. $ Since $ L $ is positive, we have
\begin{align*}0&\leq\langle Lw_{a}, w_{a}\rangle\\
&=\langle L(x+a\langle Lx, y\rangle y), x+a\langle Lx, y\rangle y\rangle\\
&=\langle Lx,x\rangle+\langle Lx,a\langle Lx, y\rangle y\rangle+\langle L(a\langle Lx, y\rangle y),x\rangle + \langle L(a\langle Lx, y\rangle y), a\langle Lx, y\rangle y\rangle\\
&=\langle Lx,x\rangle+a\overline{\langle Lx, y\rangle}\langle Lx, y\rangle+a\langle Lx, y\rangle\langle L y,x\rangle + a^{2}\langle Lx, y\rangle\overline{\langle Lx, y\rangle}\langle Ly,  y\rangle\\
&=\langle Lx,x\rangle+a\overline{\langle Lx, y\rangle}\langle Lx, y\rangle+a\langle Lx, y\rangle\overline{\langle Lx, y\rangle} + a^{2}\langle Lx, y\rangle\overline{\langle Lx, y\rangle}\langle Ly,  y\rangle.\end{align*}
Thus, for all $a\in \mathbb{R}$, $$\langle Lx,x\rangle+2a\overline{\langle Lx, y\rangle}\langle Lx, y\rangle+ a^{2}\langle Lx, y\rangle\overline{\langle Lx, y\rangle}\langle Ly,  y\rangle\geq0.$$ Consequently, taking the left half of the earlier inequality as a polynomial in $ a $, we have that the discriminant of this polynomial  is less  or equal than 0, that is,  \begin{equation}\label{4rr4}(2\overline{\langle Lx,y\rangle}\langle Lx, y\rangle)^{2}-4\overline{\langle Lx,y\rangle}\langle Lx, y\rangle\langle Ly,  y\rangle\langle Lx,x\rangle\leq0.\end{equation}
Since $\langle Lx, y\rangle\neq 0$, $\overline{\langle Lx,y\rangle}\langle Lx, y\rangle>0$. Therefore, by the inequality  \eqref{4rr4}, we have $\overline{\langle Lx,y\rangle}\langle Lx, y\rangle-\langle Ly,  y\rangle\langle Lx,x\rangle\leq0$, that is, \begin{equation}\label{dsac}\overline{\langle Lx,y\rangle}\langle Lx, y\rangle\leq\langle Ly,  y\rangle\langle Lx,x\rangle.\end{equation}

Is clear that the previous inequality also holds when $ \langle Lx, y \rangle = $ 0 because $ L $ is positive, hence holds for all $ x, y \in H $. Thus, taking $ x \in H $, with $ \Vert x \Vert = 1 $, and $ y = Lx $, by \eqref{dsac} we have $$\Vert Lx\Vert^{4}\leq\langle L^{2}x,  Lx\rangle\langle Lx,x\rangle\leq \Vert L^{2}x\Vert\Vert  Lx\Vert\langle Lx,x\rangle\leq \Vert L\Vert\Vert Lx\Vert^{2}\langle Lx,x\rangle,$$
 that is, \begin{equation}\label{dsac2}\Vert Lx\Vert^{2}\leq \Vert L\Vert\langle Lx,x\rangle\quad\text{for all }x\in H \text{ with }\Vert x\Vert =1.\end{equation}

Since $L$ is a isomorphism, it follows from Proposition \ref{inversivelimagem} that there exists  $c_{1}>0$ such that
 \[ c_{1}\leq\inf_{\Vert x\Vert=1}\Vert Lx\Vert^{2}.\]
Taking  $c=c_{1}/\Vert L\Vert$, by \eqref{dsac2} we have
 \[c\leq\inf_{\Vert x\Vert=1}\langle Lx,x\rangle,\]
witch proves the lemma.
\end{proof}

\begin{teo}\label{definidopositivo} The set $GL_{S}^{+}(H)$ is convex.
\end{teo}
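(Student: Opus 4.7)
The plan is to show that for $L, T \in GL_{S}^{+}(H)$ and $t \in [0,1]$, the operator $M := tL+(1-t)T$ lies in $GL_{S}^{+}(H)$. The endpoints being trivial, assume $t \in (0,1)$. The verification splits into two parts. The ``positive'' part is immediate: self-adjointness is preserved under real linear combinations, and for every unit vector $x$ one has
\[
\langle Mx,x\rangle = t\langle Lx,x\rangle + (1-t)\langle Tx,x\rangle > 0,
\]
since each summand is strictly positive. The real work is the invertibility of $M$, which is not automatic since $H$ may be infinite-dimensional.

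For invertibility I would exploit Lemma \ref{asdfgh33}, applying it separately to the positive isomorphisms $L$ and $T$ to extract constants $c_{L}, c_{T} > 0$ with $\langle Lx,x\rangle \ge c_{L}$ and $\langle Tx,x\rangle \ge c_{T}$ for every $x$ with $\Vert x\Vert = 1$. Setting $c := tc_{L} + (1-t)c_{T} > 0$, the convex combination inherits the uniform lower bound $\langle Mx,x\rangle \ge c$ on the unit sphere. A single Cauchy--Schwarz step gives $\Vert Mx\Vert \ge \langle Mx,x\rangle \ge c$ whenever $\Vert x\Vert = 1$, and homogeneity upgrades this to $\Vert Mx\Vert \ge c\Vert x\Vert$ for all $x \in H$. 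By Proposition \ref{inversivelimagem}, $M$ is injective, $M^{-1}:\Img M \to H$ is bounded, and $\Img M$ is closed in $H$.

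It remains to see that $\Img M = H$, and this is where I expect the main subtlety. I would argue by self-adjointness: since $M = M^{*}$ is injective, the orthogonal complement $(\Img M)^{\perp}$ equals $\Ker M^{*} = \Ker M = \{0\}$, so $\Img M$ is dense; combined with its closedness this forces $\Img M = H$. Hence $M$ is a positive isomorphism, completing the proof. The only nontrivial ingredient beyond routine bookkeeping is the coercivity estimate supplied by Lemma \ref{asdfgh33}, which serves, in infinite dimensions, as the replacement for the dimension argument that closed the finite-dimensional case above.
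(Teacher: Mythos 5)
Your proof is correct and follows essentially the same route as the paper: coercivity from Lemma \ref{asdfgh33}, a Cauchy--Schwarz estimate to get $\Vert Mx\Vert \ge c\Vert x\Vert$, Proposition \ref{inversivelimagem} for closedness of the range, and self-adjointness to conclude $\Img M = H$. The only cosmetic quibble is that injectivity of $M$ should be noted explicitly (it follows immediately from positivity or from the coercivity bound) before invoking Proposition \ref{inversivelimagem}, whose hypothesis requires it.
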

\begin{proof}
Let $L$ and $T$ be  positive isomorphisms and $t\in [0,1]$. Since $tL+(1-t)T$ is  positive, we have  $$\Ker (tL+(1-t)T)=\{0\}.$$

Now,  we shall show that  $tL+(1-t)T$ is surjective. To this end, let us first see that  $\Img tL+(1-t)T$ is closed.
By the previous lemma, we have that there exists positive real numbers $c_{1}$ and $c_{2}$ such that
\[c_{1}\leq\underset{x\in H}{\inf_{\Vert x\Vert=1}}\langle Lx,x\rangle \quad\text{and}\quad c_{2}\leq \underset{x\in H}{\inf_{\Vert x\Vert=1}}\langle Tx,x\rangle .\]
Therefore, if $x\in H$ with $\Vert x \Vert=1,$ by the Cauchy-Schwarz inequality we obtain that
\begin{align*}\Vert (tL+(1-t)T)x\Vert&\geq\langle (tL+(1-t)T)x,x\rangle\\ & = t\langle Lx,x\rangle+(1-t)\langle Tx,x\rangle \\ &\ge (tc_1 +(1-t) c_2)\Vert x\Vert .\end{align*}
Hence,  since $tc_1 +(1-t) c_2>0,$ it follows from Proposition \ref{inversivelimagem}  that  $\Img(tL+(1-t)T)$ is closed.
Thus, since $tL+(1-t)T$ is self-adjoint and knowing the fact that for any operator $S\in L(H)$, $[\Ker S]^{\perp}=\overline{\Img(S^{\ast})}$, we obtain that
\begin{align*}\Img (tL+(1-t)T)&=\overline{\Img (tL+(1-t)T)}\\
&=\overline{\Img (tL+(1-t)T)^{\ast}}\\
&=[\Ker(tL+(1-t)T)]^{\perp}\\
&= H,\end{align*}
that is, $tL+(1-t)T$ is surjective.
Consequently, $tL+(1-t)T$ is a positive isomorphism.\end{proof}

We denote by $L_{S}(H)$ the space of self-adjoint operators in $L(H).$ Note that $L_{S}(H)$ is not vector subspace $L(H)$, because if $L\neq 0\in L_{S}(H)$, $iL\notin L_{S}(H)$. However, it is not difficult to prove that  $L_{S}(H)$ is a vector space over the field of real numbers. Whereas $L_{S}(H)$ whit the topological subspace structure of $L(H)$,  is easy to see that it is a real Banach space.

We shall now show that $GL_{S}^{+}(H)$ is an open subset of $L_{S}(H)$.
\begin{propo}\label{r34rfd555454} The set $GL_{S}^{+}(H)$ is  open in $L_{S}(H).$
\end{propo}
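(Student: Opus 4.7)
The plan is to show that a small self-adjoint perturbation of a positive isomorphism remains both strictly positive and invertible. Fix $L\in GL_{S}^{+}(H)$. By Lemma \ref{asdfgh33} applied to $L$, there exists $c>0$ such that
\[
\langle Lx,x\rangle \geq c\quad \text{for all } x\in H \text{ with }\Vert x\Vert=1,
\]
or equivalently $\langle Lx,x\rangle\geq c\Vert x\Vert^{2}$ for every $x\in H$. Also, since $L$ is an isomorphism, $1/\Vert L^{-1}\Vert>0$. Set
\[
\delta=\min\bigl\{c,\ 1/\Vert L^{-1}\Vert\bigr\}>0,
\]
and consider the open ball $B(L,\delta)=\{T\in L_{S}(H):\Vert T-L\Vert<\delta\}$ in $L_{S}(H)$. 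I would then show that $B(L,\delta)\subseteq GL_{S}^{+}(H)$, which proves openness.

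To check that an arbitrary $T\in B(L,\delta)$ lies in $GL_{S}^{+}(H)$, I need two things: that $T$ is positive and that $T$ is an isomorphism. For positivity, given any $x\in H$ with $\Vert x\Vert=1$, Cauchy--Schwarz yields
\[
|\langle (T-L)x,x\rangle|\leq \Vert (T-L)x\Vert\,\Vert x\Vert \leq \Vert T-L\Vert,
\]
so that
\[
\langle Tx,x\rangle=\langle Lx,x\rangle+\langle (T-L)x,x\rangle\geq c-\Vert T-L\Vert>c-\delta\geq 0.
\]
Since $T\in L_{S}(H)$ is self-adjoint and $\langle Tx,x\rangle>0$ for every unit $x$ (using $\delta\leq c$ with strict inequality $\Vert T-L\Vert<\delta$), $T$ is positive. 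For invertibility, since $\Vert T-L\Vert<\delta\leq 1/\Vert L^{-1}\Vert$, Proposition \ref{aberturagl} gives that $T$ is an isomorphism. Therefore $T\in GL_{S}^{+}(H)$.

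There is no real obstacle here; the only subtle point is that the definition in the paper requires strict positivity $\langle Tx,x\rangle>0$ for unit $x$, which forces me to keep the inequality $\Vert T-L\Vert<\delta$ strict and choose $\delta$ no larger than $c$ so that the residual $c-\Vert T-L\Vert$ is genuinely positive. Combining the two conditions via the single radius $\delta=\min\{c,1/\Vert L^{-1}\Vert\}$ then delivers a neighborhood of $L$ contained in $GL_{S}^{+}(H)$, and since $L$ was arbitrary, $GL_{S}^{+}(H)$ is open in $L_{S}(H)$.
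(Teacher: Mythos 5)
Your proof is correct and takes essentially the same route as the paper: both invoke Lemma \ref{asdfgh33} to get the lower bound $c>0$ on the quadratic form of $L$, take the radius $\min\{c,1/\Vert L^{-1}\Vert\}$, appeal to Proposition \ref{aberturagl} for invertibility of the perturbation, and use a Cauchy--Schwarz estimate to preserve strict positivity.
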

\begin{proof}
Let $L\in GL_{S}^{+}(H)$. If follows from Lemma \ref{asdfgh33}   that there exists $c>0$ such that
\[c\le\inf_{\Vert x\Vert=1}\langle Lx,x\rangle .\]
Let $T\in L_{S}(H)$ be such that $\Vert L-T\Vert <\min\{1/\Vert L^{-1}\Vert, c\}.$ Thus,  $T$ is an isomorphism (see  Lemma \ref{aberturagl}) and, furthermore, for  $x \neq 0\in H$, we have
\[\langle Lx,x\rangle\geq c\langle x,x\rangle >\Vert L-T\Vert \langle x,x\rangle = \langle\Vert L-T\Vert x,x\rangle \geq \langle (L-T) x,x\rangle,\]
that is,
 \[0<\langle Lx,x\rangle-\langle (L-T) x,x\rangle=\langle T x,x\rangle\quad\text{for all }x\in H\text{ with }x\neq 0.\]
So,  $T$ is a positive isomorphism. \end{proof}

Since $L_{S}(H)$ is a  real Banach space,  it follows from Proposition \ref{r34rfd555454} that $GL_{S}^{+}(H)$  is a differentiable Banach  manifold, with
\begin{equation}\label{we133}T_{L}GL_{S}^{+}(H)\cong L_{S}(H)\quad \text{ for }L\in GL^{+}_{S}(H),
\end{equation}
where $T_{L}GL_{S}^{+}(H)$ denotes the  tangent space of $L\in GL_{S}^{+}(H)$.

\section{Functions of operators}
Let  $L\in L(E)$ be fixed. Based on the Cauchy's integral formula, in this chapter we will see that if $ f: \Delta \rightarrow \mathbb{C} $ is a holomorphic application, where $ \Delta $ is an open subset of $ \mathbb{C} $ containing $ \sigma (L), $ we can define the operator $ f (L) \in L (E) $ as \[f (L) = - \frac{1}{2 \pi i} \int_ {\Gamma} f (\lambda ) (L-\lambda I) ^ {-1} d \lambda, \]
where $ \Gamma $ is a path (or a finite number of paths that do not intersect) closed, simple, positively oriented, contained in $ \Delta $ and containing $ \sigma(L) $ in its interior.

In Chapter 5 we shall prove that if $ L \in GL_{ S}^{+} (H) $, the nonnegative square root  of $ L $ can be expressed in the form $ \gamma (L) $, where $ \gamma : \Theta \rightarrow \mathbb {C} $ is a appropriated holomorphic application and $ \Theta $ is an open subset of the complex numbers containing $ \sigma (L) $.

\medskip

We shall first  remember some concepts such as: rectifiable paths (closed, single or positively oriented), Riemann's integral of holomorphic applications, among others.
\begin{defin}Let $\Gamma:[a,b]\rightarrow \mathbb{C}$ be a path, that is, a continuous application.  For any partition of $[a,b]$, given by $P=\{t_{0},t_{1},...,t_{m}\}$, we define
\[\label{comprimento}\Lambda_{\Gamma}(P)=\sum_{k=1}^{m}\Vert \Gamma(t_{k})-\Gamma(t_{k-1})\Vert.\]
If
\[\Lambda_{\Gamma}=\sup\{\Lambda_{\Gamma}(P):P\text{ is a partition of }[a,b]\}\]
is finite, thus we say that $\Gamma$ is rectifiable and its  \textit{length} is $\Lambda_{\Gamma}$.
\end{defin}

\begin{defin}\label{defincurva}Let  $\Gamma:[a,b]\rightarrow \mathbb{C}$ be a path.

We say that $\Gamma$ is \textit{closed} if $\Gamma(a)=\Gamma(b).$
In this case the \textit{interior} of $\Gamma$, denoted by $\mathring{\Gamma}$, is a region of $\mathbb{C}$ bounded by  $\Gamma.$

We say that $\Gamma$ is \textit{simple} if $\Gamma(t_{1})\neq \Gamma(t_{2})$ for $t_{1},t_{2}\in [a,b],$ with $t_{1}\neq t_{2}$ and at least one of them is an interior point $[a,b].$
If $t_{1}<t_{2},$ we use the notation $\Gamma(t_{1})< \Gamma(t_{2})$, whenever $\Gamma(t_{1})\neq \Gamma(t_{2})$.
For simplicity, the image  $\Gamma([t_{1},t_{2}])$ is denoted  by $[\Gamma(t_{1}),\Gamma(t_{2})]$. Furthermore,  we shall write   $\lambda\in\Gamma$ to denote that  $\lambda$ belongs at  $\Gamma([a,b])$.

A closed path $\Gamma$ is \textit{positively oriented} if it traversed in a counterclockwise direction, ie, its interior is on the left, go to the $\Gamma$.

For simplicity, a closed, simple,   positively oriented and rectifiable path $\Gamma:[a,b]\rightarrow \mathbb{C}$ is called  \textit{closed path}, and  moreover, it  will be denoted by  $\Gamma$.\end{defin}

Now recall the definition of the integral of an application  along  a path contained in the complex plane.  Consider  a rectifiable path $\Gamma:[a,b]\rightarrow \Delta$. A \textit{partition}  of  $\Img\Gamma$ is a subset  $P=\{\lambda_{0}, \lambda_{1}, \lambda_{2},...,\lambda_{n}\}\subseteq\Img\Gamma,$ where $\lambda_{0}=\Gamma(a)$ and $\lambda_{n}=\Gamma(b)$, such that
\[\lambda_{0}< \lambda_{1}< \lambda_{2}<...<\lambda_{n}.\] 
The \textit{norm} of the partition $P$ of $\Img\Gamma$ is defined by $$\label{normaparticao}\Vert P\Vert =\max\{ |\lambda_{i}-\lambda_{i-1}|:i=1,...,n\}.$$ 

Now take one application   $f:\Delta \rightarrow E$ and a rectifiable  path $\Gamma:[a,b]\rightarrow \Delta$.  For a partition $P=\{\lambda_{0}, \lambda_{1}, \lambda_{2},...,\lambda_{n}\}$ of the image of $\Gamma$,
let $Q=\{\zeta_{1},\zeta_{2},...,\zeta_{n}\}$, where  $\zeta_{i}\in[\lambda_{i-1},\lambda_{i}],$ for $i=1,2,...,n.$ 
Consider the sum  \begin{equation}\label{mlok}S(P,Q,f)=\sum_{i=1}^{n}(\lambda_{i}-\lambda_{i-1})f(\zeta_{i}).
\end{equation}
\begin{defin}[Integrable applications]\label{3ewdf}We say that $f:\Delta \rightarrow E$ is \textit{integrable} in the path $\Gamma$ if there exists a number $A$ with the following property: For given  $\varepsilon >0,$ there exists a $\delta>0$ such that 
\[|S(P,Q,f)-A|<\varepsilon\text{ whenever }\Vert P\Vert <\lambda.\]
The number  $A$ is called \textit{integral} of $f$ on  $\Gamma$ and we shall denote by  \[
\int_{\Gamma}f(\lambda)d\lambda.\]
\end{defin}
In the case of real continuous functions of a real variable, the analog notion of integral above is equivalent to the classical definition of the Riemann integral, given with the upper and lower sums.
\begin{teo}\label{343rrt}If $f:\Delta \rightarrow E$ is continuous thus is integrable in any rectifiable path  contained in $\Delta.$
\end{teo}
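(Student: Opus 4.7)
The strategy is to verify the Cauchy criterion for the Riemann sums as the partition norm tends to zero, and then invoke completeness of the Banach space $E$ to define the limit. The image $\Gamma([a,b])$ is compact as the continuous image of a compact interval, so $f\circ\Gamma\colon[a,b]\to E$ is uniformly continuous. Under the paper's standing conventions, $\Gamma$ is simple, so it restricts to a homeomorphism onto its image, and in particular $\Gamma^{-1}$ is continuous on the compact set $\Img\Gamma$ and therefore uniformly continuous. Given $\varepsilon>0$, I would first choose $\eta>0$ so that $|t-t'|<\eta$ implies $\Vert f(\Gamma(t))-f(\Gamma(t'))\Vert<\varepsilon/(1+\Lambda_{\Gamma})$, and then $\delta>0$ so that $|\lambda-\mu|<\delta$ for $\lambda,\mu\in\Img\Gamma$ implies $|\Gamma^{-1}(\lambda)-\Gamma^{-1}(\mu)|<\eta$.

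Next, I would compare two Riemann sums $S(P_{1},Q_{1},f)$ and $S(P_{2},Q_{2},f)$ for partitions with $\Vert P_{i}\Vert<\delta$ by passing through the common refinement $P=P_{1}\cup P_{2}$, with any associated sample set $Q$. Each subinterval $[\lambda_{i-1}^{1},\lambda_{i}^{1}]$ of $P_{1}$ is broken by $P$ into subpieces $[\mu_{j-1},\mu_{j}]$; the contributions to $S(P_{1},Q_{1},f)$ and to $S(P,Q,f)$ from these subpieces differ by $\sum_{j}(\mu_{j}-\mu_{j-1})(f(\zeta_{j})-f(\zeta_{i}^{1}))$. All sample points in this group pull back by $\Gamma^{-1}$ into the same parameter interval $[t_{i-1}^{1},t_{i}^{1}]$, whose length is less than $\eta$ by the choice of $\delta$, so each term $\Vert f(\zeta_{j})-f(\zeta_{i}^{1})\Vert$ is bounded by $\varepsilon/(1+\Lambda_{\Gamma})$. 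Summing over $i$ and using $\sum_{j}|\mu_{j}-\mu_{j-1}|\leq\Lambda_{\Gamma}$ from rectifiability, the total discrepancy $\Vert S(P_{1},Q_{1},f)-S(P,Q,f)\Vert$ is less than $\varepsilon$; combining with the analogous estimate against $P_{2}$ gives $\Vert S(P_{1},Q_{1},f)-S(P_{2},Q_{2},f)\Vert<2\varepsilon$.

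Since $E$ is complete, this Cauchy property implies that any sequence of Riemann sums whose partition norms tend to zero converges to some $A\in E$, and a routine argument (comparing any such sequence to a fixed one via the same estimate) shows $A$ is independent of the choices made and satisfies the defining property in Definition \ref{3ewdf}. The main technical obstacle is the first step above: converting the chord-distance mesh $\Vert P\Vert$ on $\Img\Gamma$ into a parameter-distance mesh on $[a,b]$. This is where simplicity of $\Gamma$ is used, since it ensures that $\Gamma^{-1}$ is defined and uniformly continuous on a compact set; once this translation is available, rectifiability supplies the crucial finite bound $\sum|\mu_{j}-\mu_{j-1}|\leq\Lambda_{\Gamma}$ that prevents the telescoping estimate from blowing up.
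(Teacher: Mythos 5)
Your overall plan — Cauchy criterion for the Riemann sums via uniform continuity, telescoping through a common refinement, then completeness of $E$ — is the standard existence argument and is essentially what the Knopp reference the paper cites in lieu of a proof would give; the rectifiability bound $\sum_j|\mu_j-\mu_{j-1}|\le\Lambda_\Gamma$ is used in exactly the right place and the refinement estimate itself is carried out correctly.

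The gap is in the chord-to-parameter conversion, and it bites precisely in the case the paper actually needs, namely closed paths. When $\Gamma(a)=\Gamma(b)$ the map $\Gamma$ is not injective on $[a,b]$: the claim that $\Gamma$ ``restricts to a homeomorphism onto its image'' is false, and $\Gamma^{-1}$ is neither well defined at $\Gamma(a)$ nor uniformly continuous near it. Concretely, a small chord mesh $\Vert P\Vert$ does not force short parameter subintervals: take $\lambda_{1}=\Gamma(t_{1})$ with $t_{1}$ just short of $b$, so that $\lambda_{2}=\Gamma(b)=\Gamma(a)=\lambda_{0}$; then $|\lambda_{1}-\lambda_{0}|=|\Gamma(t_{1})-\Gamma(b)|$ and $|\lambda_{2}-\lambda_{1}|$ are both as small as you like, while $t_{1}-t_{0}$ is nearly the whole interval, the sample $\zeta_{1}$ may lie anywhere on the curve, and the Riemann sum collapses toward zero irrespective of the true value of the integral. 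Thus the implication ``$\Vert P\Vert<\delta$ implies every $|t_{i}-t_{i-1}|<\eta$'' that your argument rests on is false for closed paths. (This is partly a defect of the paper's own chord-based definition of $\Vert P\Vert$, which your proof inherits.) The usual remedy is to measure fineness in the parameter — equivalently on the quotient $[a,b]/\{a\sim b\}$ — after which uniform continuity of $f\circ\Gamma$ gives the estimate directly without ever inverting $\Gamma$; alternatively one must argue separately that partitions with one abnormally long parameter subinterval near the identified endpoint can be excluded. For non-closed simple rectifiable arcs your proof is complete as written.
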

We can to find a prove of the above Theorem  when   $E=\mathbb{C}$, for example, in  \cite{kk}, Chapter 3, \S 9. However, it  is not difficult to see that the same proof also applies to the case where $E$ is any complex Banach space.

\medskip

It follows from Theorem \ref{343rrt} and  Definition \ref{3ewdf} that, if $f$ is continuous, for any  sequence   $(P_{n},Q_{n})_{n=1}^{\infty}$ of partitions of $\Gamma$ such that $\underset{n\rightarrow \infty}{\lim}\Vert P_{n}\Vert=0,$ thus \begin{equation}\label{eqwwr23e}\int_{\Gamma}f(\lambda)d\lambda=\lim_{n\rightarrow \infty}S(P_{n},Q_{n},f).\end{equation}

\medskip

A property  of the above  integral  is given in the following lemma (see, for example, in \cite{kk}, p. 45, Theorem 5).
\begin{lem}\label{inyegralcontinua} If $f:\Delta\rightarrow E$ is holomorphic, thus, for any rectifiable path $\Gamma\subseteq \Delta,$ \[\left\Vert\int_{\Gamma}f(\lambda)d\lambda\right\Vert\leq Ml,\]
 where $M=\sup_{\lambda\in \,\Gamma}|f(\lambda)|$ and $l$ is the length  of $\Gamma.$
\end{lem}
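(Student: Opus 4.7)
My plan is to bound the integral by bounding each approximating Riemann sum and then taking the limit. Since $f$ is holomorphic, it is continuous, so by Theorem \ref{343rrt} the integral $\int_{\Gamma} f(\lambda)\, d\lambda$ exists and, by \eqref{eqwwr23e}, can be computed as the limit of $S(P_n, Q_n, f)$ for any sequence of tagged partitions with $\Vert P_n\Vert \to 0$.

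First I would fix such a sequence $(P_n, Q_n)$ with $P_n = \{\lambda_0^n, \lambda_1^n, \dots, \lambda_{k_n}^n\}$ and $Q_n = \{\zeta_1^n, \dots, \zeta_{k_n}^n\}$. By the triangle inequality applied to the Riemann sum \eqref{mlok},
\[
\left\Vert S(P_n, Q_n, f)\right\Vert \;\leq\; \sum_{i=1}^{k_n} |\lambda_i^n - \lambda_{i-1}^n|\,\Vert f(\zeta_i^n)\Vert \;\leq\; M\sum_{i=1}^{k_n} |\lambda_i^n - \lambda_{i-1}^n|,
\]
since $\zeta_i^n \in [\lambda_{i-1}^n,\lambda_i^n] \subseteq \Gamma$ and $\Vert f(\zeta_i^n)\Vert \leq M$.

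Next I would relate the remaining sum to the length of $\Gamma$: because each $\lambda_i^n = \Gamma(t_i^n)$ for some $t_0^n < t_1^n < \dots < t_{k_n}^n$ in $[a,b]$ with $t_0^n = a$ and $t_{k_n}^n = b$, the sum $\sum_{i=1}^{k_n}|\Gamma(t_i^n) - \Gamma(t_{i-1}^n)|$ equals $\Lambda_\Gamma(\{t_0^n,\dots,t_{k_n}^n\})$, which by definition of $\Lambda_\Gamma$ is bounded above by $l$. Hence $\Vert S(P_n,Q_n,f)\Vert \leq Ml$ for every $n$.

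Finally, since the norm is continuous and $S(P_n,Q_n,f) \to \int_\Gamma f(\lambda)\,d\lambda$ in $E$ by \eqref{eqwwr23e}, taking $n\to\infty$ yields
\[
\left\Vert \int_{\Gamma} f(\lambda)\,d\lambda \right\Vert = \lim_{n\to\infty}\Vert S(P_n,Q_n,f)\Vert \leq Ml,
\]
as required. The only mildly subtle step is the length estimate: one must observe that the distances between consecutive partition points $\lambda_i^n \in \Img\Gamma$ are exactly chord lengths $|\Gamma(t_i^n)-\Gamma(t_{i-1}^n)|$ for a partition of $[a,b]$, so their sum is dominated by $\Lambda_\Gamma = l$; everything else is the triangle inequality plus passage to the limit.
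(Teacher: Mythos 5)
Your proof is correct. The paper offers no proof of this lemma---it only cites Knopp \cite{kk}---so there is no in-paper argument to compare against. Your argument is the standard one and uses exactly the apparatus the paper sets up: you bound each Riemann sum $S(P_n,Q_n,f)$ from \eqref{mlok} termwise by $M\,|\lambda_i^n-\lambda_{i-1}^n|$, observe that $\sum_i|\lambda_i^n-\lambda_{i-1}^n|=\Lambda_\Gamma(\{t_0^n,\dots,t_{k_n}^n\})\le\Lambda_\Gamma=l$ because the partition points of $\Img\Gamma$ pull back to a partition of $[a,b]$, and then pass to the limit using \eqref{eqwwr23e} together with the continuity of the norm. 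You also correctly flag the only mildly subtle point (chord lengths versus arc length). Two small remarks you could add for completeness: the quantity $M$ is finite because $\Img\Gamma$ is the continuous image of the compact interval $[a,b]$ and $f$ is continuous there; and holomorphy of $f$ is more than is needed---mere continuity of $f$ on a neighbourhood of $\Gamma$ already yields both existence of the integral (Theorem \ref{343rrt}) and the estimate.
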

We shall below  present some classical results of  complex functions theory that will be used in this section. The following definition is analogous to the \textit{Cauchy integral formula} for holomorphic complex applications (see, for example,  \cite{kk},  p.\ 61).

\begin{defin}\label{453g545}Suppose that $L\in L(E)$ and let $f:\Delta\rightarrow \mathbb{C}$ be a holomorphic application such that $ \sigma(L)\subseteq\Delta$. Let  $\omega \subseteq \mathbb{C}$ be a open set  such that its  boundary consists  of a finite number of closed paths $\Gamma_{1},$...,$ \Gamma_{n}$ and  $$\sigma(L)\subseteq  \omega= \bigcup_{i=1}^{n}\mathring{\Gamma}_{i}\subseteq
\overline{\bigcup_{i=1}^{n}\mathring{\Gamma}_{i}}\subseteq \Delta.$$
The operator $f(L)$ is defined by
\begin{equation}\label{1qazxc2}f(L)=-\frac{1}{2\pi i}\int_{\partial\omega}f(\lambda)(L-\lambda I)^{-1}d\lambda.\end{equation}\end{defin}
The existence of above integral is follows from Theorem \ref{343rrt}, because the application $\lambda\mapsto f(\lambda)(L-\lambda I)^{-1}$ is continuous (Proposition \ref{3r3eff}). Therefore, $f(L)\in L(E).$

\medskip

The following theorem, whose proof can be found, for example, in \cite{msc}, p.\ 136, Theorem 6.12, we see a first property of the operator defined above.
\begin{teo}\label{12wq}Let $ L $ be an operator on $ L (E) $ and $ \Gamma $ be a closed path such that $ \sigma (L) \subseteq \mathring{\Gamma} $. Then, for each positive integer $ k $, we have
\[L^{k} = - \frac{1}{2 \pi i} \int_ {\Gamma} \lambda ^ {k} (L-\lambda I) ^ {-1} d \lambda .\]
\end{teo}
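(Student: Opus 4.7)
The plan is to reduce the statement to a direct computation on a single, very large circle, and then move the resulting integral to the given path $\Gamma$ via a Cauchy-type path independence argument. Since Definition \ref{453g545} already presupposes that the formula \eqref{1qazxc2} is well-defined (so that the value of the integral does not depend on the choice of the surrounding path, as long as it encloses $\sigma(L)$ and is contained in the domain of holomorphy), the main work is the explicit evaluation on a conveniently chosen path.

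First, I would fix $R > \Vert L \Vert$ and let $C_R$ be the circle of radius $R$ centered at the origin, positively oriented. By \eqref{dfghjd} we have $\sigma(L) \subseteq \{|\lambda| < R\} = \mathring{C}_R$, so $C_R$ is an admissible path for the definition of $f(L)$ with $f(\lambda)=\lambda^{k}$. For $|\lambda|=R$ we have $\Vert L/\lambda \Vert = \Vert L \Vert / R < 1$, so the Neumann series
\[
(L-\lambda I)^{-1} \;=\; -\lambda^{-1}\bigl(I-\lambda^{-1}L\bigr)^{-1} \;=\; -\sum_{n=0}^{\infty} \lambda^{-(n+1)} L^{n}
\]
converges in $L(E)$, uniformly for $|\lambda|=R$, since $\Vert \lambda^{-(n+1)} L^{n}\Vert \leq R^{-1}(\Vert L\Vert/R)^{n}$ and this majorant is summable.

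Next, uniform convergence on the compact path $C_R$ allows term-by-term integration (this is where Lemma \ref{inyegralcontinua} is used to control tails of the series in norm). Multiplying by $\lambda^{k}$ and integrating,
\[
\int_{C_R} \lambda^{k}(L-\lambda I)^{-1}\,d\lambda \;=\; -\sum_{n=0}^{\infty} L^{n}\int_{C_R} \lambda^{k-n-1}\,d\lambda.
\]
By the classical computation $\int_{C_R} \lambda^{m}\,d\lambda = 2\pi i$ if $m=-1$ and $0$ otherwise, only the term with $n=k$ survives, and it yields $-\,2\pi i\, L^{k}$. Dividing by $-2\pi i$ gives the desired identity, but with $C_R$ in place of $\Gamma$.

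Finally, I would invoke path independence: both $\Gamma$ and $C_R$ are closed paths enclosing $\sigma(L)$, and the integrand $\lambda\mapsto \lambda^{k}(L-\lambda I)^{-1}$ is holomorphic on $\rho(L)$ by Proposition \ref{3r3eff}; hence the two integrals coincide (this is exactly the well-definedness hypothesis built into Definition \ref{453g545}, applied to the entire function $f(\lambda)=\lambda^{k}$). I expect the subtlest point to be the justification of the term-by-term integration of an operator-valued series; however, uniform norm convergence on the compact curve $C_R$ together with Lemma \ref{inyegralcontinua} makes this routine, and no deeper obstacle appears.
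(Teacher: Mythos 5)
The paper does not actually prove this theorem; it simply refers the reader to Schechter, \cite{msc}, Theorem 6.12. Your proof is the standard Dunford-calculus computation, and it is correct: with $R>\Vert L\Vert$, the identity $(L-\lambda I)^{-1}=-\lambda^{-1}(I-\lambda^{-1}L)^{-1}=-\sum_{n\ge 0}\lambda^{-(n+1)}L^{n}$ converges uniformly in operator norm on $|\lambda|=R$, the ML estimate of Lemma \ref{inyegralcontinua} applied to the tails justifies integrating term by term, and the scalar identity $\int_{C_R}\lambda^{m}\,d\lambda=2\pi i$ for $m=-1$ (and $0$ otherwise) leaves only the $n=k$ term, giving $L^{k}$. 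The one point worth flagging is your final appeal to path-independence: the paper never actually \emph{proves} that the integral in Definition \ref{453g545} is independent of the admissible contour, so you are relying on an unstated Cauchy theorem for operator-valued holomorphic functions. That assumption is equally implicit in the paper, so it is fair to use here; but if you wanted a self-contained argument, you could observe that $C_R$ is itself an admissible $\Gamma$ for the theorem's hypothesis, and for a general admissible $\Gamma$ either deform it to $C_R$ within $\rho(L)$ or invoke the Cauchy-Goursat theorem for $L(E)$-valued holomorphic maps on the region between the two contours.
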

It is well known that if $ L \in L (E) $ and $ f: \mathbb{C} \rightarrow \mathbb{C} $ is a polynomial given by $ f (\lambda) = \sum_ {k = 0} ^ {n} a_ {k} \lambda ^ {k}, $ where $ a_ {0}, a_ {1}, ..., a_ {n} \in \mathbb{C} $, the operator $ f (L) $ $ $ is defined as $$f(L)=\sum_{k=0}^{n}a_{k}L^{k},\quad\text{where }L^{0}=I.$$
As a consequence of the previous theorem we have
\begin{align*}f(L)&=\sum_{k=0}^{n}a_{k}L^{k}\\
&=-\frac{1}{2\pi i}\sum_{k=0}^{n}a_{k}\int_{\Gamma}\lambda^{k}(L-\lambda I)^{-1}d\lambda\\
&=-\frac{1}{2\pi i}\int_{\Gamma}\sum_{k=0}^{n}a_{k}\lambda^{k}(L-\lambda I)^{-1}d\lambda\\
&=-\frac{1}{2\pi i}\int_{\Gamma}f(\lambda)(L-\lambda I)^{-1}d\lambda.
\end{align*}
Thus, $\sum_{k=0}^{n}a_{k}L^{k}$ coincides  with the definition given in the formula \eqref{1qazxc2}.

\medskip

We shall  end this chapter by presenting  the following  properties  of operator $f(L)$,
whose  proofs can be found , for example, in  \cite{msc}, p.\ 138, Lemma 6.15 and p. 139, Theorem 6.17, respectively.
\begin{lem}\label{fghhjjuuyy}Let $L$  be an  operator in $ L(E)$. Suppose that  $f:\Delta \rightarrow \mathbb{C}$ and $g:\Delta \rightarrow \mathbb{C}$ are holomorphic applications  and  $\sigma(L)\subseteq \Delta$. If $h:\Delta \rightarrow \mathbb{C}$ is defined by $h(\lambda)=f(\lambda)g(\lambda),$ then
\[h(L)=f(L)g(L).\]
\end{lem}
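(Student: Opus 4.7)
The plan is to represent $f(L)$ and $g(L)$ as integrals along two different closed paths that both surround $\sigma(L)$, take their product, apply the resolvent identity, and then dispatch the two resulting double integrals via the Cauchy integral formula. Concretely, first I would choose two systems of closed paths $\Gamma_{f}$ and $\Gamma_{g}$ whose unions form the boundaries of open sets $\omega_{f},\omega_{g}$ with
\[\sigma(L)\subseteq \omega_{g}\subseteq\overline{\omega_{g}}\subseteq \omega_{f}\subseteq\overline{\omega_{f}}\subseteq \Delta,\]
so that every point of $\Gamma_{f}$ lies outside every path of $\Gamma_{g}$, while every point of $\Gamma_{g}$ lies inside $\Gamma_{f}$. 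This geometric separation is what allows the two Cauchy-type evaluations at the end to give, respectively, $0$ and $2\pi i\,f(\mu)$.

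Next I would write
\[f(L)g(L)=\frac{1}{(2\pi i)^{2}}\int_{\Gamma_{f}}\int_{\Gamma_{g}} f(\lambda)g(\mu)(L-\lambda I)^{-1}(L-\mu I)^{-1}\,d\mu\,d\lambda,\]
using continuity of the operator-valued integrand and the definition of the integral via Riemann sums (equation \eqref{eqwwr23e}) to justify that the product of the two integrals is the iterated integral of the product, and to interchange the order of integration when needed. Then I would apply the first resolvent identity, which follows from \eqref{t34tgty12} by taking the difference $(L-\mu I)-(L-\lambda I)=(\lambda-\mu)I$, namely
\[(L-\lambda I)^{-1}(L-\mu I)^{-1}=\frac{1}{\lambda-\mu}\bigl[(L-\lambda I)^{-1}-(L-\mu I)^{-1}\bigr],\]
which is valid since $\lambda\in\Gamma_{f}$ and $\mu\in\Gamma_{g}$ are never equal.

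Substituting produces two double integrals. In the first,
\[\frac{1}{(2\pi i)^{2}}\int_{\Gamma_{f}} f(\lambda)(L-\lambda I)^{-1}\Biggl[\int_{\Gamma_{g}}\frac{g(\mu)}{\lambda-\mu}\,d\mu\Biggr]d\lambda,\]
the inner integral vanishes by Cauchy's theorem, because for fixed $\lambda\in\Gamma_{f}$ the function $\mu\mapsto g(\mu)/(\lambda-\mu)$ is holomorphic on a neighborhood of $\overline{\omega_{g}}$. In the second,
\[-\frac{1}{(2\pi i)^{2}}\int_{\Gamma_{g}} g(\mu)(L-\mu I)^{-1}\Biggl[\int_{\Gamma_{f}}\frac{f(\lambda)}{\lambda-\mu}\,d\lambda\Biggr]d\mu,\]
the inner integral equals $2\pi i\,f(\mu)$ by the classical Cauchy integral formula, since $\mu\in\Gamma_{g}$ lies in the interior of $\Gamma_{f}$. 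What remains is
\[-\frac{1}{2\pi i}\int_{\Gamma_{g}} f(\mu)g(\mu)(L-\mu I)^{-1}\,d\mu=h(L),\]
which is the desired identity.

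The main obstacle I expect is the bookkeeping justifying the Fubini-type interchange and the reduction of the product of two operator-valued line integrals to a single iterated operator-valued integral; this has to be done via Riemann sums using \eqref{mlok} and the bilinearity and continuity of operator composition, together with the uniform boundedness of the continuous integrands on compact paths. Choosing the paths so that the separation $\lambda\ne\mu$ is uniform (i.e.\ $|\lambda-\mu|$ bounded below) is what keeps the integrand $1/(\lambda-\mu)$ bounded, which makes these interchanges and estimates (via Lemma \ref{inyegralcontinua}) routine once set up correctly.
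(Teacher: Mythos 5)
The paper does not prove this lemma itself; it only cites Schechter, \emph{Principles of Functional Analysis}, p.~138, Lemma 6.15. Your argument is precisely the standard Riesz--Dunford multiplicativity proof (and the one in Schechter): take two nested systems of closed paths $\Gamma_{g}\subseteq\omega_{g}\subseteq\overline{\omega_{g}}\subseteq\omega_{f}$ around $\sigma(L)$, write the product $f(L)g(L)$ as a double contour integral, use the first resolvent equation
\[(L-\lambda I)^{-1}(L-\mu I)^{-1}=\frac{1}{\lambda-\mu}\bigl[(L-\lambda I)^{-1}-(L-\mu I)^{-1}\bigr],\]
and split into two iterated integrals; the inner integral over $\Gamma_{g}$ vanishes by Cauchy's theorem because $\lambda\in\Gamma_{f}$ is exterior to $\overline{\omega_{g}}$, while the inner integral over $\Gamma_{f}$ gives $2\pi i\,f(\mu)$ by the Cauchy integral formula because $\mu\in\Gamma_{g}$ is interior to $\Gamma_{f}$. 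The sign bookkeeping with the paper's $-\tfrac{1}{2\pi i}$ convention checks out, the Fubini-type interchange is justified as you indicate by uniform boundedness of the continuous integrand on the compact product of contours (with $|\lambda-\mu|$ bounded below by the separation of the contours), and the result is $-\tfrac{1}{2\pi i}\int_{\Gamma_{g}}f(\mu)g(\mu)(L-\mu I)^{-1}\,d\mu=h(L)$. Your proof is correct and coincides with the cited source's approach.
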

\begin{lem}\label{fucniondelespctro}Let $L$  be an  operator in $ L(E)$. If $f:\Delta \rightarrow \mathbb{C}$ is holomorphic in a open neighborhood    of $\sigma(L),$ then
\[\sigma(f(L))=f(\sigma(L)),\]
that is, $\lambda\in \sigma(f(L))$ if and only if $\lambda=f(\zeta)$ for  someone $\zeta\in \sigma(L).$
\end{lem}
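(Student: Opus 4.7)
The plan is to prove both inclusions separately, relying heavily on the multiplicative property of the holomorphic functional calculus (Lemma \ref{fghhjjuuyy}).

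First, for the inclusion $f(\sigma(L))\subseteq \sigma(f(L))$, I would fix $\mu\in \sigma(L)$ and try to show $f(\mu)\in \sigma(f(L))$. The key observation is that the function
\[
g(\lambda)=\frac{f(\lambda)-f(\mu)}{\lambda-\mu},\qquad g(\mu)=f'(\mu),
\]
is holomorphic on $\Delta$ (the singularity at $\mu$ is removable). Since $f(\lambda)-f(\mu)=(\lambda-\mu)g(\lambda)$, Lemma \ref{fghhjjuuyy} applied to the factorization, together with the fact that the constant function $1$ and the identity $\lambda\mapsto\lambda$ yield $I$ and $L$ respectively (by Theorem \ref{12wq}), gives
\[
f(L)-f(\mu)I=(L-\mu I)g(L)=g(L)(L-\mu I).
\]
If $f(L)-f(\mu)I$ were invertible, then $(f(L)-f(\mu)I)^{-1}g(L)$ and $g(L)(f(L)-f(\mu)I)^{-1}$ would be a right and a left inverse for $L-\mu I$, contradicting $\mu\in\sigma(L)$. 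Hence $f(\mu)\in\sigma(f(L))$.

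For the reverse inclusion $\sigma(f(L))\subseteq f(\sigma(L))$, I would argue by contrapositive: take $\mu\notin f(\sigma(L))$ and produce an inverse for $f(L)-\mu I$. Since $f(\lambda)-\mu\neq 0$ for every $\lambda\in\sigma(L)$, and $\sigma(L)$ is compact while $f$ is continuous on $\Delta$, one may shrink $\Delta$ to an open neighborhood $\Delta'$ of $\sigma(L)$ on which $f(\lambda)-\mu$ has no zeros. Then
\[
h(\lambda)=\frac{1}{f(\lambda)-\mu}
\]
is holomorphic on $\Delta'$, so by Lemma \ref{fghhjjuuyy} applied to $h(\lambda)(f(\lambda)-\mu)\equiv 1$ we obtain $h(L)(f(L)-\mu I)=(f(L)-\mu I)h(L)=I$, showing $\mu\in\rho(f(L))$.

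The main obstacle I anticipate is not the algebra but the logistical subtlety behind the second step: one must carefully shrink the domain $\Delta$ so that $h$ is genuinely holomorphic on an open set containing $\sigma(L)$, and then verify that the product formula of Lemma \ref{fghhjjuuyy} can be applied with the same admissible contour $\Gamma$ for both $h$, $f-\mu$, and their product $1$. Once one checks that the definition of $f(L)$ is independent of the chosen contour (implicit in Definition \ref{453g545}) and that the constant function $1$ produces the identity operator (a direct consequence of Theorem \ref{12wq} with $k=0$), both inclusions fall out cleanly.
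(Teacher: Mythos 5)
Your argument is correct and is the standard Riesz--Dunford spectral mapping proof; note, however, that the paper itself does not prove this lemma but simply cites Schechter (\cite{msc}, Theorem~6.17), whose proof proceeds by essentially the same two factorizations you use. One small bookkeeping point: Theorem~\ref{12wq} as stated in the paper covers only \emph{positive} integers $k$, so ``$k=0$'' is not literally an instance of it; the fact that the constant function $1$ yields $I$ is instead covered by the paragraph following Theorem~\ref{12wq} on polynomials $\sum_{k=0}^n a_k\lambda^k$ (with $L^0=I$), and it is that discussion, rather than the theorem itself, you should invoke. Apart from that, both inclusions are sound: the factorization $f(\lambda)-f(\mu)=(\lambda-\mu)g(\lambda)$ together with Lemma~\ref{fghhjjuuyy} and the commutativity of the functional calculus gives a two-sided inverse of $L-\mu I$ whenever $f(L)-f(\mu)I$ is invertible, and the compactness-of-$\sigma(L)$ argument in the reverse direction correctly produces the holomorphic reciprocal $h$ on a shrunken neighbourhood and hence an inverse of $f(L)-\mu I$.
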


\section{Continuity of nonnegative square root application}
As stated in the introduction, we will prove that the application  that for each nonnegative isomorphism $L\in L(H)$ associates  its nonnegative square root  $R\in L(H)$ is a homeomorphism. For this purpose, we shall see that there exists a holomorphic application $\gamma :\Theta \rightarrow \mathbb{C}$, with $\sigma(L)\subseteq \Theta\subseteq \mathbb{C}$, such that $R=\gamma(L).$

In fact, let  $\Theta=\{a+ib\in\mathbb{C}: a>0\}$ and let \begin{equation}\label{funcionraizc}\gamma:\Theta\rightarrow \mathbb{C}, \quad\text{give by }
\lambda\mapsto |\lambda|^{1/2}e^{\frac{i \text{Arg } z}{2}},\end{equation}
where $\text{Arg } z$ denotes  principal argument of $z\in \mathbb{C}.$
We can to see, for example, in \cite{cnc}, p. 64, Example 15, that $\gamma$ is holomorphic in $\Theta$.
It is easy to see that  \begin{equation}\label{funcionraizc2}\gamma(\overline{\lambda})=
\overline{\gamma(\lambda)}\quad\text{and}\quad\gamma(\lambda)^{2}=
\gamma(\lambda)\gamma(\lambda)=\lambda\quad\text{ for all }\lambda\in \Theta.\end{equation}

Let  $L$ be an operator in $GL_{S}^{+}(H)$. It follows from Theorem \ref{wsde12} that $\sigma(L)$ is a subset of positive real numbers. Hence, $\sigma(L)\subseteq \Theta.$ Consequently, as we saw (Definition \ref{453g545}), for a  appropriate path  $\Gamma$, we can to define $$\gamma(L)=-\frac{1}{2\pi i}\int_{\Gamma}\gamma(\lambda)(L-\lambda I)^{-1}d\lambda.$$  We shall now that $\gamma(L)$ is the nonnegative square root of $L$. For this purpose, we shall need the following basic results.
\begin{lem}\label{isomoreee}Let  $L\in L(H)$ be an isomorphism and $R\in L(H)$ be an operator such that  $R^{2}=L$, then $R$ is a isomorphism.
\end{lem}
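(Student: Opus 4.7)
The plan is to exhibit an explicit bounded two-sided inverse of $R$ from the assumption $R^{2}=L$ together with the fact that $L^{-1}\in L(H)$ exists. Multiplying the identity $R^{2}=L$ on either side by $L^{-1}$ gives $L^{-1}R^{2}=I$ and $R^{2}L^{-1}=I$, which rewrite as
\[
(L^{-1}R)\,R = I \qquad\text{and}\qquad R\,(RL^{-1}) = I.
\]
So $R$ has $L^{-1}R\in L(H)$ as a left inverse and $RL^{-1}\in L(H)$ as a right inverse.

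Next I would invoke the elementary algebraic fact that whenever an element of an associative algebra has both a left and a right inverse, the two agree and are the two-sided inverse: if $AR=I$ and $RB=I$, then $A=A(RB)=(AR)B=B$. Applied with $A=L^{-1}R$ and $B=RL^{-1}$, this yields $R^{-1}=L^{-1}R=RL^{-1}$, which in particular is an element of $L(H)$ since it is a composition of bounded operators. Therefore $R$ is an isomorphism.

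There is essentially no hard step here; the only subtle point is that in this paper ``isomorphism'' means bijective with \emph{bounded} inverse, but the explicit formula $R^{-1}=L^{-1}R$ makes boundedness immediate, so one does not need to invoke the open mapping theorem. I expect the write-up to amount to displaying the two identities above and the one-line algebraic manipulation that identifies the left and right inverses.
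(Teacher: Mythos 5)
Your proof is correct and complete. The paper itself gives no argument for this lemma, stating only ``It is clear,'' so there is nothing to compare against; your derivation---obtaining the bounded left inverse $L^{-1}R$ and bounded right inverse $RL^{-1}$ from $R^{2}=L$ and then invoking the standard algebraic fact that a left and a right inverse must coincide---is exactly the elementary argument the authors presumably had in mind, and it has the merit of producing the inverse explicitly as $R^{-1}=L^{-1}R=RL^{-1}$ without any appeal to the open mapping theorem.
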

\begin{proof}It is clear.
\end{proof}
\begin{lem}\label{raizquadrada2}If $L\in L(H)$ is a  nonnegative isomorphism, then $L$ is  positive.
\end{lem}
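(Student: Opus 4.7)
The plan is to argue by contradiction, using the existence of the nonnegative square root. Assume $L$ is a nonnegative isomorphism but not positive. Then there exists $x_{0}\in H$ with $\Vert x_{0}\Vert=1$ such that $\langle Lx_{0},x_{0}\rangle=0$. I will show this forces $Lx_{0}=0$, contradicting injectivity of $L$.

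By Theorem \ref{raizquadrada}, $L$ admits a nonnegative (in particular, self-adjoint) square root $R\in L(H)$ with $R^{2}=L$. Using self-adjointness of $R$,
\[\langle Lx_{0},x_{0}\rangle=\langle R^{2}x_{0},x_{0}\rangle=\langle Rx_{0},Rx_{0}\rangle=\Vert Rx_{0}\Vert^{2}.\]
So $\langle Lx_{0},x_{0}\rangle=0$ implies $Rx_{0}=0$, and hence $Lx_{0}=R(Rx_{0})=0$. Since $L$ is an isomorphism, $x_{0}=0$, which contradicts $\Vert x_{0}\Vert=1$. Therefore every $x\neq 0$ satisfies $\langle Lx,x\rangle>0$, i.e., $L$ is positive.

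An alternative route, if one prefers to avoid invoking Theorem \ref{raizquadrada}, is to notice that the Cauchy--Schwarz-type inequality
\[|\langle Lx,y\rangle|^{2}\leq \langle Lx,x\rangle\langle Ly,y\rangle\]
derived in the proof of Lemma \ref{asdfgh33} (inequality \eqref{dsac}) only uses nonnegativity of $L$, not strict positivity. Applying it with $x=x_{0}$ and arbitrary $y\in H$ yields $\langle Lx_{0},y\rangle=0$ for every $y$, hence $Lx_{0}=0$, and the same contradiction follows.

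There is really no substantive obstacle here; the only thing to be careful about is not to circularly invoke Lemma \ref{asdfgh33} (whose hypothesis already assumes $L$ positive). The square-root argument is the cleanest and is the one I would write up.
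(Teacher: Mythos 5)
Your main argument is essentially the paper's: both rest on the identity $\langle Lx,x\rangle=\langle R^{2}x,x\rangle=\Vert Rx\Vert^{2}$ for the nonnegative square root $R$. The only (minor, and if anything cleaner) difference is that the paper invokes Lemma~\ref{isomoreee} to conclude $R$ is an isomorphism and hence $\Vert Rx\Vert>0$ directly, whereas you pass from $Rx_{0}=0$ to $Lx_{0}=R(Rx_{0})=0$ and use injectivity of $L$ alone, so you do not need Lemma~\ref{isomoreee}.
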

\begin{proof}We shall  show that $\langle Lx,x\rangle >0$ for each $x\in H$, with $\Vert x\Vert =1.$ Let $R$ be the nonnegative square root of $L$ (Theorem \ref{raizquadrada}). Since $L$ is a isomorphism, $R$ is an isomorphism  (Lemma \ref{isomoreee}). Let $x\in H$, with $\Vert x\Vert =1.$ Since $R^{2}=L$ and $R$ is self-adjoint, we find that
$$\langle Lx,x\rangle = \langle R^{2}x,x\rangle =\langle Rx,Rx\rangle=\Vert Rx\Vert^{2} >0.$$
\end{proof}

\begin{lem}\label{eswqr} Let $L\in L(H)$ be self-adjoint, then
\begin{equation*}\inf_{\Vert x\Vert =1}\langle Lx,x\rangle = \min _{\lambda\in \sigma(L)}\lambda.
\end{equation*}
\end{lem}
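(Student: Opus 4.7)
The plan is to establish the two inequalities separately, after noting that the minimum on the right is well-defined because $\sigma(L)$ is a nonempty compact subset of $\mathbb{R}$ (the realness coming from $L$ being self-adjoint). Write $m = \inf_{\Vert x\Vert=1}\langle Lx,x\rangle$ and $\mu = \min_{\lambda\in\sigma(L)}\lambda$.

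The inequality $m \le \mu$ is immediate from Theorem \ref{wsde12}, which gives $\sigma(L)\subseteq [m,M]$; in particular every $\lambda\in\sigma(L)$ satisfies $\lambda \ge m$, and taking the minimum yields $\mu\ge m$.

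The reverse inequality is the substantive part: I would prove it by showing $m\in\sigma(L)$, from which $\mu\le m$ follows at once. Consider the operator $L - mI$. It is self-adjoint, and by the very definition of $m$,
\[
\langle (L-mI)x,x\rangle = \langle Lx,x\rangle - m\Vert x\Vert^{2} \ge 0 \quad\text{for all } x\in H,
\]
so $L - mI$ is nonnegative. Suppose, for contradiction, that $L - mI$ is invertible. Then it is a nonnegative isomorphism, hence positive by Lemma \ref{raizquadrada2}. Applying Lemma \ref{asdfgh33} to $L - mI$ produces a constant $c>0$ with
\[
\inf_{\Vert x\Vert=1}\langle (L-mI)x,x\rangle \ge c > 0.
\]
But the left-hand side equals $\inf_{\Vert x\Vert=1}\langle Lx,x\rangle - m = m - m = 0$, which is a contradiction. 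Therefore $L - mI$ is not invertible, meaning $m\in\sigma(L)$, and consequently $\mu\le m$.

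The main obstacle is precisely this second inequality: one has to locate $m$ itself inside the spectrum rather than merely bound it. The ingredient that makes the argument clean is the availability of Lemmas \ref{asdfgh33} and \ref{raizquadrada2}, which together say that a nonnegative isomorphism is uniformly bounded below in the quadratic form sense; without that quantitative bound, the invertibility of $L - mI$ would not immediately contradict the definition of $m$.
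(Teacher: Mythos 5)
Your proof is correct and follows essentially the same route as the paper's: establish $m \le \mu$ from Theorem \ref{wsde12}, then derive a contradiction from the assumption that $L - mI$ is invertible by combining Lemma \ref{raizquadrada2} (nonnegative isomorphism is positive) with Lemma \ref{asdfgh33} (positive isomorphism has quadratic form bounded away from zero). The only cosmetic difference is that the paper phrases the contradiction hypothesis as $\lambda_0 > m$ (which then forces $L - mI$ to be an isomorphism), while you hypothesize invertibility of $L - mI$ directly; these are the same argument.
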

\begin{proof}
Let \[\lambda_{0}=\min _{\lambda\in \sigma(L)}\lambda\quad\text{ and } \quad m=\inf_{\Vert x\Vert =1}\langle Lx,x\rangle.\] It follows from Theorem \ref{wsde12} that $\lambda_{0}\geq m.$ Suppose that $\lambda_{0}> m.$ Thus $L-mI$ is a isomorphism and, for all $x\in H$,  with $\Vert x\Vert =1,$
\[\langle (L-mI)x,x\rangle =\langle Lx,x\rangle -\langle m x,x\rangle= \langle Lx,x\rangle - m \geq 0,\]
that is, $L-mI$ is a nonnegative operator. Since $L-mI$ is a nonnegative isomorphism,  $L-mI$ is  positive by  Lemma  \ref{raizquadrada2}. It follows from Lemma \ref{asdfgh33} that there exists $c>0$ such that \[\inf_{\Vert x\Vert =1}\langle (L-mI)x,x\rangle \geq c.\]
However,
\begin{align*}\inf_{\Vert x\Vert =1}\langle (L-mI)x,x\rangle &=\inf_{\Vert x\Vert =1}[\langle Lx,x\rangle-\langle mx,x\rangle]\\
&=\inf_{\Vert x\Vert =1}\langle Lx,x\rangle -m\\
&=m-m=0.\end{align*}
It is contradicting that \[\inf_{\Vert x\Vert =1}\langle (L-mI)x,x\rangle >0.\] Consequently, $\lambda_{0}=m.$
\end{proof}

\begin{teo}Let $L\in GL_{S}^{+}(H)$. There exists a closed path $\Gamma\subseteq \Theta$, with $\sigma(L)\subseteq  \mathring{\Gamma}$,  such that
\begin{equation}\label{3r4ff4gf}L^{1/2}=\gamma(L)=-\frac{1}{2\pi i}\int_{\Gamma}\gamma(\lambda)(L-\lambda I)^{-1}d\lambda,\end{equation}
where $L^{1/2}$ denotes the nonnegative square root of $L.$
\end{teo}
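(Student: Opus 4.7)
The strategy is to construct a suitable closed path $\Gamma$, define $\gamma(L)$ via \eqref{1qazxc2}, and verify that $\gamma(L)$ is a nonnegative square root of $L$; the uniqueness clause in Theorem \ref{raizquadrada} will then immediately force $\gamma(L)=L^{1/2}$. To construct $\Gamma$: since $L$ is a nonnegative isomorphism, Lemma \ref{raizquadrada2} shows $L$ is in fact positive, so Lemma \ref{asdfgh33} furnishes $c>0$ with $\inf_{\Vert x\Vert=1}\langle Lx,x\rangle\geq c$. Combined with Theorem \ref{wsde12} and \eqref{dfghjd}, this places $\sigma(L)\subseteq[c,\Vert L\Vert]\subseteq(0,\infty)\subseteq\Theta$. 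I then take $\Gamma$ to be a rectangle (or circle) symmetric about the real axis, chosen so that $\sigma(L)\subseteq\mathring{\Gamma}$ and $\overline{\mathring{\Gamma}}\subseteq\Theta$. Definition \ref{453g545} thereby produces $\gamma(L)\in L(H)$.

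The identity $\gamma(L)^{2}=L$ comes directly from the functional calculus: by \eqref{funcionraizc2}, $\gamma(\lambda)\gamma(\lambda)=\lambda$ on $\Theta$, so Lemma \ref{fghhjjuuyy} gives $\gamma(L)^{2}=h(L)$ with $h(\lambda)=\lambda$, and Theorem \ref{12wq} with $k=1$ identifies $h(L)=L$.

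The main obstacle is self-adjointness of $\gamma(L)$. My plan is to push the adjoint inside the contour integral in \eqref{1qazxc2}, which is legitimate because $*$ is continuous on $L(H)$ and the integral is a limit of Riemann sums of the form \eqref{mlok}. Using $L^{*}=L$, one has $((L-\lambda I)^{-1})^{*}=(L-\overline{\lambda}I)^{-1}$, and conjugating the factor $d\lambda$ amounts to replacing $\Gamma$ by its reflection $\overline{\Gamma}$ across $\mathbb{R}$. Because $\Gamma$ was chosen symmetric about the real axis, $\overline{\Gamma}$ has the same image as $\Gamma$ but with reversed orientation; after the change of variable $\mu=\overline{\lambda}$ and the application of $\gamma(\overline{\lambda})=\overline{\gamma(\lambda)}$ from \eqref{funcionraizc2}, this reversed orientation contributes a sign that, together with the effect of conjugating $-1/(2\pi i)$, recovers exactly the integral defining $\gamma(L)$. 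Hence $\gamma(L)^{*}=\gamma(L)$.

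Finally, Lemma \ref{fucniondelespctro} gives $\sigma(\gamma(L))=\gamma(\sigma(L))\subseteq(0,\infty)$. Since $\gamma(L)$ is self-adjoint, Lemma \ref{eswqr} yields $\inf_{\Vert x\Vert=1}\langle\gamma(L)x,x\rangle=\min_{\mu\in\sigma(\gamma(L))}\mu>0$, so $\gamma(L)$ is positive and in particular nonnegative. Thus $\gamma(L)$ is a nonnegative square root of $L$, and the uniqueness clause of Theorem \ref{raizquadrada} forces $\gamma(L)=L^{1/2}$.
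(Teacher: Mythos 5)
Your proposal is correct and follows essentially the same route as the paper: choose a circle (the paper takes the one centered at $(c+\Vert L\Vert)/2$ through $c/2$ and $\Vert L\Vert+c/2$) symmetric about $\mathbb{R}$ with $\sigma(L)\subseteq\mathring\Gamma\subseteq\Theta$ via Lemmas \ref{asdfgh33} and \ref{raizquadrada2}; get $\gamma(L)^{2}=L$ from Lemma \ref{fghhjjuuyy}; obtain self-adjointness by passing the adjoint through the Riemann sums and exploiting the symmetry of $\Gamma$ and $\gamma(\overline\lambda)=\overline{\gamma(\lambda)}$; deduce nonnegativity from Lemmas \ref{fucniondelespctro} and \ref{eswqr}; and invoke uniqueness in Theorem \ref{raizquadrada}. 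The only cosmetic difference is that you frame the self-adjointness step as a change of variable $\mu=\overline\lambda$ on the contour, while the paper carries this out explicitly at the level of the reindexed partition $\xi_{k}=\overline{\lambda}_{n-k}$ --- the same computation.
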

\begin{proof}
It is clear that there are infinitely many  paths $\Gamma$ such that the integral in \eqref{3r4ff4gf} is well defined (it is sufficient to find a closed path  contained in $\Theta $ such that contains $\sigma(L)$ in its interior). The definition  of this integral doesn't depend  of these paths, however, we will choose a path $\Gamma$ that  facilitate the remaining the proof. By Lemma \ref{asdfgh33} there exists
 $c>0$ such that $$\inf_{\Vert x\Vert=1}\langle Lx,x\rangle \ge c.$$
It follows from equation \eqref{dfghjd},  Theorem \ref{wsde12} and Lemma \ref{asdfgh33} that $\sigma(L)\subseteq [c,\Vert L\Vert]$. Then, $$\sigma(L)\subseteq (c/2, \Vert L\Vert +c/2).$$ Take  $\Gamma$ as the circumference (positively oriented)  centered in $\frac{c+\Vert L\Vert}{2}$ (the middle point of $(c/2, \Vert L\Vert +c/2)$) and passing through the points $c/2$ and $\Vert L\Vert +c/2.$
Hence, \[\sigma(L)\subseteq \mathring{\Gamma}\subseteq \Theta.\]
Consequently, the integral in \eqref{3r4ff4gf} is well defined.
We shall now prove that  $\gamma(L)$ is the nonnegative square root of $L$. By the uniqueness  in the Theorem \ref{raizquadrada}, it is sufficient  to prove that $\gamma(L)^{2}=L$ and that $\gamma(L)$ is nonnegative. In fact, since $\gamma(\lambda)\gamma(\lambda)=\lambda$ for all $\lambda\in \Theta,$ by Lemma \ref{fghhjjuuyy} we obtain that  $$\gamma(L)^{2}=\gamma(L)\gamma(L)=L.$$
See now that $\gamma(L)$ is self-adjoint. For this purpose, we will use  the definition of above  integral as the limit  of a sequence  of sums as in  \eqref{mlok} (see  \eqref{eqwwr23e}). In fact, para cada $n\in\mathbb{N}$, take a partition $P_{n}=\{\lambda_{0}, \lambda_{1}, ...,\lambda_{n}\}$ of $\Gamma$  such that, for $k=0,1,...,n,$ $$|\lambda_{k}-\lambda_{k-1}|\rightarrow 0 \quad\text{ when }n\rightarrow \infty.$$ Moreover, for $k=0,1,...,n,$ let $\xi_{k}=\overline{\lambda}_{n-k}.$ Therefore, $\xi_{k}\in \Gamma$ (by definition of $\Gamma$) and, since $\lambda_{0}< \lambda_{1}<...<\lambda_{n}=\lambda_{0},$ then  $\xi_{0}< \xi_{1}<...<\xi_{n}=\xi_{0}.$ Take $x,y\in H.$ Since $[(L-\lambda I)^{-1}]^{\ast}=(L-\overline{\lambda} I)^{-1}$ and $\overline{\gamma(\lambda)}=\gamma(\overline{\lambda})$, we have
\begin{align*}\langle\sum_{k=1}^{n}\gamma(\lambda_{k})(\lambda_{k}-\lambda_{k-1})
&(L-\lambda_{k}I)^{-1} x,y\rangle\\
&=\langle x,\sum_{k=1}^{n}\overline{\gamma(\lambda_{k})}(\overline{\lambda}_{k}-\overline{\lambda}_{k-1})
(L-
\overline{\lambda}_{k}I)^{-1}y\rangle\\
&=\langle x,\sum_{k=1}^{n}\gamma(\overline{\lambda}_{k})(\overline{\lambda}_{k}-\overline{\lambda}_{k-1})
(L-
\overline{\lambda}_{k}I)^{-1}y\rangle\\
&=\langle x,\sum_{k=1}^{n}\gamma(\xi_{n-k})(\xi_{n-k}-\xi_{n-k+1})
(L-
\xi_{n-k}I)^{-1}y\rangle\\
&=\langle x,-\sum_{k=1}^{n}\gamma(\xi_{n-k})(\xi_{n-k+1}-\xi_{n-k})
(L-
\xi_{n-k}I)^{-1}y\rangle\\
&=\langle x,-\sum_{j=1}^{n}\gamma(\xi_{j-1})(\xi_{j}-\xi_{j-1})(L-\xi_{j-1}I)^{-1}y\rangle.
\end{align*}
Therefore,
\begin{align*}\langle\frac{1}{2\pi i}\sum_{k=1}^{n}\gamma(\lambda_{k})(\lambda_{k}&-\lambda_{k-1})(L-\lambda_{k}I)^{-1} x,y\rangle
\\
&=\langle x,\frac{1}{2\pi i}\sum_{j=1}^{n}\gamma(\xi_{j-1})(\xi_{j}-\xi_{j-1})
(L-
\xi_{j-1}I)^{-1}y\rangle.\end{align*}
By \eqref{eqwwr23e},
\begin{align*}\lim_{n\rightarrow \infty}-\frac{1}{2\pi i}\sum_{k=1}^{n}\gamma(\lambda_{k})&(\lambda_{k}-\lambda_{k-1})(L-
\lambda_{k}I)^{-1}
=\gamma(L)\\
&=\lim_{n\rightarrow \infty}-\frac{1}{2\pi i}\sum_{j=1}^{n}\gamma(\xi_{j-1})(\xi_{j}-\xi_{j-1})(L-
\xi_{j-1}I)^{-1}.\end{align*}
Thus  $\langle  \gamma(L)x,y\rangle=\langle x,\gamma(L)y\rangle$ for $x,y\in H.$
This fact proves that  $\gamma(L)$ is self-adjoint.

We shall now prove that $\gamma(L)$ is nonnegative. The Lemma \ref{fucniondelespctro} implies that  $$\sigma(\gamma(L))=\gamma(\sigma(L)).$$ Thus, since $\sigma(L)\subseteq \mathbb{R}^{+}$, then  $\sigma(\gamma(L))\subseteq \mathbb{R}^{+}$. It is a consequence of Lemma  \ref{eswqr} that
\begin{equation*} 0\leq\min_{\lambda\in\sigma(\gamma(L))}\lambda=\inf_{\Vert x\Vert =1}\langle \gamma(L)x,x\rangle, \end{equation*}
that is,   $\gamma(L)$ is nonnegative.
\end{proof}
It follows from Lemma \ref{isomoreee} that, if $L\in GL_{S}^{+}(H),$ then $L^{1/2}\in GL_{S}^{+}(H).$ Then, by Theorem \ref{raizquadrada} we obtain that the application   \begin{align*}\mathcal{R}:GL_{S}^{+}(H)&\rightarrow GL_{S}^{+}(H)\\
L&\mapsto L^{1/2}
\end{align*} is well  defined.
Using the above Theorem, we shall now prove that this  application is a homeomorphism.
\begin{teo}\label{conti23}The application \begin{align*}\mathcal{R}:GL_{S}^{+}(H)&\rightarrow GL_{S}^{+}(H)\\
L&\mapsto L^{1/2}
\end{align*}
is a homeomorphism.
\end{teo}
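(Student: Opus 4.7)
The plan is to prove the theorem in three steps: first, exhibit $\mathcal{R}$ as a bijection whose inverse is the squaring map; second, verify that squaring is continuous; and third, establish continuity of $\mathcal{R}$ itself using the integral representation \eqref{3r4ff4gf}, which is the substantive step.

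Injectivity of $\mathcal{R}$ is immediate from the uniqueness clause in Theorem \ref{raizquadrada}. For surjectivity, given $R\in GL_{S}^{+}(H)$, set $L=R^{2}$; then $L$ is self-adjoint, is an isomorphism by Lemma \ref{isomoreee}, and satisfies $\langle Lx,x\rangle=\|Rx\|^{2}>0$ for all $x\neq 0$, so $L\in GL_{S}^{+}(H)$ and $\mathcal{R}(L)=R$. Thus $\mathcal{R}^{-1}(R)=R^{2}$, and its continuity follows from the factorization $R^{2}-R_{0}^{2}=R(R-R_{0})+(R-R_{0})R_{0}$, giving $\|R^{2}-R_{0}^{2}\|\leq(\|R\|+\|R_{0}\|)\|R-R_{0}\|$.

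For continuity of $\mathcal{R}$, I would fix $L_{0}\in GL_{S}^{+}(H)$ and select a contour that works throughout a neighborhood of $L_{0}$. By Lemma \ref{asdfgh33} there exists $c_{0}>0$ with $\langle L_{0}x,x\rangle\geq c_{0}$ for all unit $x$; combined with \eqref{dfghjd} this yields $\sigma(L_{0})\subseteq[c_{0},\|L_{0}\|]$. If $\|L-L_{0}\|<c_{0}/2$, then for all unit $x$,
\[\langle Lx,x\rangle\geq\langle L_{0}x,x\rangle-\|L-L_{0}\|\geq c_{0}/2,\]
so $\sigma(L)\subseteq[c_{0}/2,\|L_{0}\|+c_{0}/2]$ uniformly in such $L$. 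Hence I fix a single circle $\Gamma\subseteq\Theta$ enclosing this interval, and this $\Gamma$ is admissible for every $L$ in the neighborhood. Applying \eqref{3r4ff4gf} with the common contour $\Gamma$ to both $L$ and $L_{0}$, together with the resolvent identity \eqref{t34tgty12}, gives
\[L^{1/2}-L_{0}^{1/2}=\frac{1}{2\pi i}\int_{\Gamma}\gamma(\lambda)(L-\lambda I)^{-1}(L-L_{0})(L_{0}-\lambda I)^{-1}\,d\lambda.\]
Since $\lambda\mapsto\|(L_{0}-\lambda I)^{-1}\|$ is continuous on the compact $\Gamma$, it attains a maximum $M_{0}$; Proposition \ref{aberturagl} applied uniformly in $\lambda\in\Gamma$ then yields a bound $\|(L-\lambda I)^{-1}\|\leq 2M_{0}$ whenever $\|L-L_{0}\|<1/(2M_{0})$. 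Lemma \ref{inyegralcontinua} now produces an estimate $\|L^{1/2}-L_{0}^{1/2}\|\leq K\|L-L_{0}\|$ with $K$ depending only on $L_{0}$, proving continuity.

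The main obstacle is precisely this double uniformity: a single contour $\Gamma$ that serves for all nearby $L$, and a resolvent bound on $\Gamma$ uniform in $L$. Both difficulties dissolve once one observes that the positivity constant in Lemma \ref{asdfgh33} is stable under small operator-norm perturbations, keeping the spectra of all nearby $L$ strictly inside $\Gamma$ at a positive distance. Combining the continuity of $\mathcal{R}$ with the continuity of $\mathcal{R}^{-1}$ established above then shows that $\mathcal{R}$ is a homeomorphism.
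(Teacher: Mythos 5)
Your proof is correct and follows essentially the same approach as the paper: fix a common contour $\Gamma$ around the spectra of all $L$ near $L_0$ by exploiting the stability of the positivity constant from Lemma~\ref{asdfgh33}, apply the integral formula \eqref{3r4ff4gf} together with the resolvent identity \eqref{t34tgty12} and the estimate of Lemma~\ref{inyegralcontinua}, and use $\mathcal{C}(R)=R^{2}$ as the continuous inverse. You are slightly more explicit than the paper on two minor points (justifying the uniform bound on $\|(L-\lambda I)^{-1}\|$ over $\Gamma$ via Proposition~\ref{aberturagl}, and spelling out the continuity of the squaring map), but the argument is the same.
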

\begin{proof}
Let $L\in GL_{S}^{+}(H)$ and $c$ be as in the previous theorem. Consider $r=\min \{1/\Vert L^{-1}\Vert,c/2\}$ and \[B(L,r)=\{T\in L_{S}(H):\Vert L-T\Vert <r\}.\] Thus $B(L,r)\subseteq GL_{S}^{+}(H)$ (see  Proposition \ref{r34rfd555454}). Let $T\in B(L,r).$ We shall now prove that \begin{equation}\label{eqwwsss}\sigma(T)\subseteq (c/3,\Vert L\Vert +c/2).\end{equation} Since $T$ is  positive,  $\sigma(T)$ is a subset of the positive real numbers (Theorem \ref{wsde12}). If $\lambda\in \sigma(T),$ then
$$\lambda\leq\Vert T\Vert \leq\Vert L\Vert+\Vert T-L\Vert<\Vert L\Vert +c/2.$$
Now, suppose that $\lambda\in \mathbb{R}$ with $0<\lambda \leq c/3.$ Let $x\in H$ be of norm 1. Since $\Vert Lx\Vert \geq \langle Lx,x\rangle \geq c\Vert x\Vert$,  $$\Vert Tx\Vert \geq \Vert Lx\Vert -\Vert Lx-Tx\Vert \geq c-c/2,$$ that is,  $\Vert Tx\Vert \geq c/2.$
Consequently,
\[\Vert (T-\lambda I)x\Vert \geq \Vert Tx\Vert -\Vert \lambda x\Vert\geq \frac{c}{2}-|\lambda|>0. \]
This fact proves that $\Ker(T-\lambda I)=\{0\}$. It is a consequence of Proposition \ref{inversivelimagem}  that $T-\lambda I$ is closed. Hence,  since  $T-\lambda I$ is self-adjoint, $$\Img (T-\lambda I)=\overline{\Img (T-\lambda I)^{\ast}}=[\Ker(T-\lambda I)]^{\perp}= H.$$ Therefore,  $\lambda\in \rho(T)$.

Let    $\Gamma$ be the circumference (positively oriented)   with center in  the middle point of the interval  $(c/3, \Vert L\Vert +c/2)$ and passing through the points $c/3$ and $\Vert L\Vert +c/2.$ It follows from \eqref{eqwwsss} that \[\sigma(T)\subseteq \mathring{\Gamma}\subseteq \Theta\quad\text{for all }T\in B(L,r).\] Hence, we can to define
$$\gamma(T)=-\frac{1}{2\pi i}\int_{\Gamma}\gamma(\lambda)(T-\lambda I)^{-1}d\lambda\quad\text{ for }T\in B(L,r).$$
The above theorem implies that, for  $T\in B(L,r)$, the  operator $T^{1/2}$ is $\gamma(T)$, that is, $$\mathcal{R}(T)=\gamma(T)\quad\text{ for }T\in B(L,r).$$

We shall prove that $\mathcal{R}$ is continuous using the above equality. 
Let $T\in B(L,r).$
By \eqref{t34tgty12} we obtain that  $$(T-\lambda I)^{-1}- (L-\lambda I)^{-1}=-(T-\lambda I)^{-1}(T-L)(L-\lambda I)^{-1}\quad\text{for }\lambda\in \Gamma.$$ Thus,
\begin{align*}
\gamma(T)-\gamma(L)&=-\frac{1}{2\pi i}\int_{\Gamma}\gamma(\lambda)(T-\lambda I)^{-1}d\lambda + \frac{1}{2\pi i}\int_{\Gamma}\gamma(\lambda)(L-\lambda I)^{-1}d\lambda \\
&=-\frac{1}{2\pi i}\int_{\Gamma}\gamma(\lambda)[(T-\lambda I)^{-1} - (L-\lambda I)^{-1}]d\lambda \\
&=\frac{1}{2\pi i}\int_{\Gamma}\gamma(\lambda)(T-\lambda I)^{-1}(T-L)(L-\lambda I)^{-1}d\lambda.
\end{align*}
Consequently, \[\gamma(T)-\gamma(L)=\frac{1}{2\pi i}\int_{\Gamma}\gamma(\lambda)(T-\lambda I)^{-1}(T-L)(L-\lambda I)^{-1}d\lambda.\]

Let $M>0$ be such that $$\Vert (T-\lambda I)^{-1}\Vert \leq M\quad \text{for }\lambda \in \Gamma \text{ and }T\in B(L,r). $$  Thus
\[\Vert\gamma(\lambda)(L-\lambda I)^{-1}(L-T)(T-\lambda I)^{-1}\Vert\leq m M^{2}\Vert L-T\Vert, \text{ where }m=\max_{\lambda\in \,\Gamma}|\gamma(\lambda)|.\]
Let $\varepsilon >0$ given. If $\Vert L-T\Vert <\varepsilon,$ then
$$\Vert \mathcal{R}(L)-\mathcal{R}(T)\Vert=\Vert\gamma(L)-
\gamma(T)\Vert<\frac{1}{2\pi}mM^{2}l\varepsilon,$$
where $l$ is the length of  $\Gamma$ (Lemma \ref{inyegralcontinua}).
Consequently, $\mathcal{R}$ is continuous.

On the other hand, it is not difficult to show that the application \begin{align*}\mathcal{C}:GL_{S}^{+}(H)&\rightarrow GL_{S}^{+}(H)\\
L&\mapsto L^{2}
\end{align*}
is the inverse of $\mathcal{R}$. Therefore, $\mathcal{R}$ is a homeomorphism.
 \end{proof}

\end{document}